\DeclareSymbolFontAlphabet{\amsbb}{bbold}
\DeclareSymbolFontAlphabet{\mathbb}{AMSb}%
 \newtheorem{thm}{Theorem}[section]
 \newtheorem{lem}[thm]{Lemma}
 \newtheorem{prop}[thm]{Proposition}
\theoremstyle{definition}
 \theoremstyle{remark}
 \newtheorem{rem}[thm]{Remark}
\newcommand{\supp}{\mathop{\mathrm{supp}}}
\DeclareSymbolFont{fouriersymbols}{FMS}{futm}{m}{n}
\DeclareSymbolFont{fourierlargesymbols}{FMX}{futm}{m}{n}
\DeclareMathDelimiter{\VERT}{\mathord}{fouriersymbols}{152}{fourierlargesymbols}{147}
\numberwithin{equation}{section}
\begin{document}
\title[]
 {Bellman functions and dimension free $L^p$--estimates for the Riesz transforms in Bessel settings}

\author[J. J. Betancor]{Jorge J. Betancor}
\address{Jorge J. Betancor, Juan C. Fari\~na\newline
    Departamento de An\'alisis Matem\'atico, Universidad de La Laguna,\newline
    Campus de Anchieta, Avda. Astrof\'isico S\'anchez, s/n,\newline
    38721 La Laguna (Sta. Cruz de Tenerife), Spain}
\email{jbetanco@ull.es, jcfarina@ull.es}

\author[E. Dalmasso]{Estefan\'ia Dalmasso}
\address{Estefan\'ia Dalmasso\newline
    Instituto de Matem\'atica Aplicada del Litoral, UNL, CONICET, FIQ.\newline Colectora Ruta Nac. NÂ° 168, Paraje El Pozo,\newline S3007ABA, Santa Fe, Argentina}
\email{edalmasso@santafe-conicet.gov.ar}

\author[J. C. Fari\~na]{Juan C. Fari\~na}

\author[R. Scotto]{Roberto Scotto}
\address{Roberto Scotto\newline
    Universidad Nacional del Litoral, FIQ.\newline Santiago del Estero 2829,\newline S3000AOM, Santa Fe, Argentina}
\email{roberto.scotto@gmail.com}

\thanks{The first and the third authors are partially supported by MTM2016-79436-P. {The second author is partially supported by PIP 2015-900 (CONICET). The fourth author is supported by CAI+D (UNL)}}
\date{\today}
\subjclass[2010]{42A50, 42B20, 42B25.}

\keywords{Riesz transforms, Bessel operators, Bellman functions, dimension free, bilinear embeddings.}

\date{}

%%% ----------------------------------------------------------------------
\begin{abstract}
In this article we prove dimension free $L^p$--boundedness of Riesz transforms associated with a Bessel differential operator. We obtain explicit estimates of the $L^p$--norms for the Bessel--Riesz transforms in terms of $p$, establishing a linear behaviour with respect to $p$. We use the Bellman function technique to prove a bilinear dimension free inequality involving Poisson semigroups defined through this Bessel operator.
\end{abstract}
%%% ----------------------------------------------------------------------
\maketitle
%%% ----------------------------------------------------------------------

\section{Introduction and main results}
For every $j=1,\dots, d$, the classical Riesz transform $R_jf$ of $f\in L^p(\mathbb R^d)$, $1\leq p<\infty$, is defined by
\[R_jf(x)=\frac{\Gamma(\frac{d+1}{2})}{\pi^{\frac{d+1}{2}}} \lim\limits_{\epsilon \rightarrow 0^+} \int_{|x-y|>\epsilon} \frac{x_j-y_j}{|x-y|^{d+1}}f(y)dy,\quad a.e.\, x=(x_1,\dots,x_d)\in \mathbb R^d.\]
As it is well-known, $R_j$ is bounded on $L^p(\mathbb R^d)$ for every $1< p<\infty$ and $j=1,\dots, d$ with constant independent of dimension. Stein \cite[Theorem,~p.~71]{St} proved something stronger than that: for every $1<p<\infty$, there exists $A_p>0$ such that $\|\mathbf{R}f\|_{L^p(\mathbb R^d)}\leq A_p \|f\|_{L^p(\mathbb R^d)}$ for $f\in L^p(\mathbb R^d)$, where $\mathbf{R}f=\left(\sum_{j=1}^d |R_jf|^2\right)^{1/2}$.
By using transference methods, Duoandikoetxea and Rubio de Francia \cite{DR} gave another proof of Stein's result.

Let $\mathcal H$ denote the Hilbert transform on $\mathbb R$. Gokhberg and Krupnik \cite{GK} and Pichorides \cite{Pi} proved that
\[\|\mathcal H\|_{L^p(\mathbb R^d)\rightarrow L^p(\mathbb R^d)}=\left\{ \begin{array}{lc}
    \tan(\frac{\pi}{2p}), & 1<p\leq 2,\\
    \cot(\frac{\pi}{2p}), & p\geq 2.
    \end{array}\right.\]
Iwaniev and Martin \cite{IM} extended this result to higher dimensions proving that, for every $j=1,\dots,d$, $\|R_j\|_{L^p(\mathbb R^d)\rightarrow L^p(\mathbb R^d)}=\|\mathcal H\|_{L^p(\mathbb R^d)\rightarrow L^p(\mathbb R^d)}$, $1<p<\infty$. This result was also showed by Ba\~nuelos and Wang \cite{BW} using probabilistic methods.

This phenomenon of dimension free estimates has been observed for
Riesz transforms appearing in other settings. For instance, Coulhon,
Muller and Zienkiewicz \cite{CMZ} worked with Riesz transforms on
the Heisenberg group; Lust-Piquard \cite{Lu1} considered discrete
Riesz transforms on a product of Abelian groups and Urban and
Zienkiewicz \cite{UZ} proved these estimates for Riesz transforms
associated with Schr\"o- dinger operators.

In recent years, finding dimension free $L^p$--estimates for Riesz
transforms associated with orthogonal systems has been on the rise
considerably. In the case of Hermite polynomials which are
associated with the Ornstein--Uhlenbeck differential operator and
the Gaussian measure, this kind of results were obtained by Pisier
\cite{Pis} and Guti\'errez \cite{Gut} (see also \cite{Gun} and
\cite{Me} for proofs using probabilistic methods). Harboure, de
Rosa, Segovia and Torrea \cite{HRST} proved $L^p$-dimension free
boundedness for Riesz transforms related to the harmonic oscillator
and the Hermite functions (see \cite{Lu2} for another proof). In
different settings associated with Laguerre operators, the results
have been obtained by Nowak \cite{No} and Stempak and Wr\'obel
\cite{SW}. A general result that applies to different orthogonal
systems was proved by Forzani, Sasso and Scotto \cite{FSS}. In
\cite{FSS}, \cite{Gut}, \cite{HRST}, \cite{Lu2}, \cite{No}, and
\cite{SW} a procedure based upon the boundedness of Littlewood-Paley
functions is used and it is inspired by the Stein's method of proof
developed in \cite{St}.

Recently, Dragi\v{c}evic and Volberg in a series of papers
(\cite{DV1}, \cite{DV2}, \cite{DV4}, and \cite{DV3}) have developed
another approach, different from any previous ones, where they use a
technique involving Bellman functions. These Bellman functions allow
them to obtain dimension free bilinear Littlewood--Paley estimates.
Then, as a consequence, dimension free $L^p$-estimates for Riesz
transforms are also obtained. Dragi\v{c}evic and Volberg have
studied classical and Gaussian (Hermite polynomial) context in
\cite{DV1}. Riesz transforms associated with the harmonic oscillator
and more general Schr\"odinger operators were analyzed in \cite{DV2}
and \cite{DV3}, respectively. In \cite{DV4}, bilinear
Littlewood--Paley estimates related to elliptic operators were
established.

By using the Bellman function techniques it is possible to get dimension free $L^p$-estimates with better constants than the ones given by other procedures. Furthermore, it is proved that these constants are linear in $\max\{p,p/(p-1)\}$ for $1<p<\infty$.

Bellman functions have also been  utilized in the boundedness of Riesz transforms appearing in other settings, such as: Bakry context (\cite{CD} and \cite{Da}), Hodge--Laguerre operators (\cite{MS}), discrete orthogonal systems (\cite{W1} and \cite{W2}), weighted Riesz transforms (\cite{DPW}) and Beurling operator (\cite{PV}).

In this paper we obtain dimension free $L^p$-estimates for Riesz transforms in this Bessel setting by using Bellman functions techniques.

Muckenhoupt and Stein (\cite{MuSt}) started with the study of
harmonic analysis associated to Bessel operators. They considered,
for {$\alpha\geq 0$}, the Bessel operator
\[B_\alpha=-x^{2\alpha} \frac{d}{dx} x^{-2\alpha}\frac{d}{dx}=-\frac{d^2}{dx^2}+\frac{2\alpha}{x}\frac{d}{dx},\]
and the Hankel transformation $h_\alpha$ defined, for every $f\in L^1((0,\infty),x^{2\alpha}dx)$, by
\[h_\alpha f(x)=\int_0^\infty \phi_y^\alpha(x) f(y) x^{2\alpha} dx, \quad x\in (0,\infty),\]
where $\phi_y^\alpha(x)=(xy)^{-\alpha+1/2} J_{\alpha-1/2}(xy)$ are
the eigenfunctions of $B_\alpha$, and $J_\mu$ represents the Bessel
function of first kind and order $\mu$. Notice that
$\phi_y^\alpha(x)=\phi_x^\alpha(y)$, $x,y\in (0,\infty)$, and
$\phi_y^\alpha \in L^p((0,\infty), x^{2\alpha} dx)$ if and only if
$\frac{2\alpha+1}{\alpha}<p\le\infty$, when $\alpha>0${, and
$p=\infty$, when $\alpha=0$} (see \cite[(5.16.1), p. 134]{Leb}).

Since the function $z^{-\mu}J_\mu(z)$ is bounded on $(0,\infty)$ for
$\mu\ge -1/2$ (see \cite[(5.16.1), p. 134]{Leb}), $h_\alpha$ defines a
bounded operator from $L^1((0,\infty),$ $x^{2\alpha}dx)$ to
$L^\infty((0,\infty),x^{2\alpha}dx)=L^\infty((0,\infty),dx)$. Also,
$h_\alpha$ can be extended from $L^1((0,\infty),$ $ x^{2\alpha}dx)\cap
L^2((0,\infty),x^{2\alpha}dx)$ to $L^2((0,\infty),x^{2\alpha}dx)$
as an isometry on $L^2((0,\infty),x^{2\alpha}dx)$ and
$h_\alpha^{-1}=h_\alpha$ (\cite{Hir}). Let us note that both the
Hankel transformation and the Bessel operator are connected by
\begin{equation}\label{HankBes}
h_\alpha(B_\alpha f)(x)=x^2 h_\alpha(f)(x), \quad x\in (0,\infty)
\end{equation}
for every $f\in C_c^\infty(0,\infty)$, the space of smooth functions with compact support in $(0,\infty)$.

Muckenhoupt and Stein (\cite{MuSt}) introduced the Riesz transform $R_\alpha$ associated with $B_\alpha$ as follows. For every $f\in L^2((0,\infty),x^{2\alpha}dx)$,
\[R_\alpha f(x)=-x h_{\alpha+1}\left(\frac{1}{y} h_\alpha f(y)\right)(x).\]
Since $h_\alpha$ is bounded on $L^2((0,\infty),x^{2\alpha}dx)$, it
is clear that $R_\alpha$ is also a bounded operator on
$L^2((0,\infty),x^{2\alpha}dx)$.  As in the classical case,
$R_\alpha$ can be extended from $L^2((0,\infty),x^{2\alpha}dx)\cap
L^p((0,\infty),x^{2\alpha}dx)$ to $L^p((0,\infty),x^{2\alpha}dx)$ as
an $L^p$--bounded operator, for every $1<p<\infty$, and from $L^1((0,\infty),x^{2\alpha}dx)$ into $L^{1,\infty}((0,\infty),x^{2\alpha}dx)$ (\cite{MuSt}). On the other hand,
$ L^p$-weighted inequalities for $R_\alpha$ were established in \cite{AK} and, more recently, in \cite{BHNV}.

We define $\mathcal A_{\alpha,c}(0,\infty)$ as the space consisting
of all those functions $\phi \in C^\infty (0,\infty)$ $\cap
L^1((0,\infty),x^{2\alpha}dx)$ such that $h_\alpha(\phi)\in
C_c^\infty(0,\infty)$. Since $h_\alpha$ is an isometry in
$L^2((0,\infty),$ $ x^{2\alpha}dx)$, the space $\mathcal
A_{\alpha,c}(0,\infty)$ is a dense subspace of
$L^2((0,\infty),x^{2\alpha}dx)$.

Motivated by \eqref{HankBes} we define, for every $f\in \mathcal A_{\alpha,c}(0,\infty)$, $B_\alpha^{-1/2}f$ by
\[B_\alpha^{-1/2}f=h_\alpha \left( \frac{1}{y}h_\alpha f(y) \right).\]
Then, we have that for every $f\in \mathcal A_{\alpha,c}(0,\infty)$
\begin{equation}\label{Ralfa}
R_\alpha f(x)=\frac{d}{dx} B_\alpha^{-1/2}f(x), \quad x\in (0,\infty).
\end{equation}
 Equality \eqref{Ralfa} says that $R_\alpha$ is the Riesz transform associated with $B_\alpha$ in the sense of Stein \cite{StLP}. Furthermore, $R_\alpha$ can be seen as a principal value integral operator. For every $f\in L^p((0,\infty),x^{2\alpha} dx)$, $1\leq p<\infty$, we can write
\[R_\alpha f(x)=\lim\limits_{\epsilon \rightarrow 0^+} \int_{|x-y|>\epsilon} R_\alpha(x,y) f(y) y^{2\alpha} dy, \quad a.e.\; x\in (0,\infty),\]
where
\[R_\alpha(x,y)= \int_0^\infty \partial_x \mathcal P_t^\alpha(x,y) dt, \quad x,y\in (0,\infty),\, x\neq y,\]
and $\mathcal P_t^\alpha$ denotes the Poisson kernel associated to $B_\alpha$ (see Section 2 for definitions). The Bessel--Riesz transform $R_\alpha$ turns out to be a Calder\'on--Zygmund operator in the homogeneous type space $((0,\infty), x^{2\alpha}dx, \rho)$ where $\rho$ represents the Euclidean metric on $(0,\infty)$ (see \cite{BCN}).

Bessel--Riesz transforms in higher dimensions were considered in
\cite{BCC}. Given a multi-index $\alpha=(\alpha_1,\dots,\alpha_d)\in
{\mathbb R^d_{\geq 0}:=[0,\infty)^d}$, let us define the
$d$-dimensional Bessel differential operator as
\[B_\alpha=\sum\limits_{i=1}^d B_{\alpha_i,x_i} = \sum\limits_{i=1}^d \left(-\frac{\partial^2}{\partial x_i^2}+\frac{2\alpha_i}{x_i}\frac{\partial}{\partial x_i}\right).\]
The Hankel transform $h_\alpha(f)$ of $f$ is defined, for $x\in \mathbb R^d_+$, by
\[h_\alpha f(x)=\int_{\mathbb R^d_+} \phi_y^\alpha (x) f(y) y^{2\alpha} dy,\]
where
\[\phi_y^\alpha (x)=\prod\limits_{j=1}^d (x_j y_j)^{-\alpha_j+1/2} J_{\alpha_j-1/2}(x_j y_j),\quad x=(x_1,\dots,x_d), \, y=(y_1,\dots,y_d)\in \mathbb{R}_+^d,\]
 $y^{2\alpha}=\prod_{j=1}^d y_j^{2\alpha_j}$
and $f\in L^1(\mathbb R^d_+,x^{2\alpha} dx)$. For every $y\in
\mathbb{R}_+^d$, the function $\phi_y^\alpha$ is an eigenfunction of
$B_\alpha$, being $B_\alpha\phi_y^\alpha=|y|^2\phi_y^\alpha$. The
Hankel transform $h_\alpha$ can be extended from $L^1(\mathbb
R^d_+,x^{2\alpha} dx)\cap L^2(\mathbb R^d_+,x^{2\alpha} dx)$ to
$L^2(\mathbb R^d_+,x^{2\alpha} dx)$ as an isometry on $L^2(\mathbb
R^d_+,x^{2\alpha} dx)$ and $h_\alpha^{-1}=h_\alpha$, as in the
one-dimensional case.

Let $i=1,\dots,d$ be given. For every $f\in L^2(\mathbb R^d_+,x^{2\alpha} dx)$, the $i$-th Bessel-Riesz transform $R_{\alpha,i}f$ is defined by
\[R_{\alpha,i} f(x)=-x_i h_{\alpha+e_i}\left(\frac{1}{|y|}h_\alpha f(y)\right)(x), \quad x=(x_1,\dots,x_d)\in \mathbb R^d_+,\]
where $e_i$ is the $i$-th unit vector on $\mathbb R^d_+$. The
operator $R_{\alpha,i}$ is bounded on $L^2(\mathbb
R^d_+,x^{2\alpha}$  $dx)$.

By $S(\mathbb{R}_+^d)$ we represent the space consisting of all
those functions $\phi\in C^\infty(\mathbb{R}_+^d)$ such that, for
every $m\in \mathbb{N}_0=\mathbb{N}\cup \{0\}$ and
$k=(k_1,\ldots,k_d)\in \mathbb{N}_0^d$,
\[
\gamma_{m,k}(\phi)=:\sup_{x\in
\mathbb{R}_+^d}|x|^m\left|\frac{\partial^{k_1+\ldots+k_d}}{\partial
x_1\ldots\partial x_d}\phi(x)\right|<\infty.
\]
$S(\mathbb{R}_+^d)$ is endowed with the topology generated by the
system $\{\gamma_{m,k}\}_{m\in \mathbb{N}_0,k\in \mathbb{N}_0^d}$ of
seminorms. The Hankel transformation $h_\alpha$ is an automorphism
in $S(\mathbb{R}_+^d)$ (\cite{Al}). For every $\phi\in
S(\mathbb{R}_+^d)$, we have that
\begin{equation}\label{oper}
h_\alpha(B_\alpha\phi)(x)=|x|^2h_\alpha(\phi)(x),\,\,\,x\in
\mathbb{R}^d_+.
\end{equation}

 We denote by $\mathcal{A}_{\alpha,c}(\mathbb{R}_+^d)$ the tensor product space
 \[\mathcal{A}_{\alpha,c}(\mathbb{R}_+^d)=\mathcal{A}_{\alpha_1,c}(\mathbb{R}_+)\otimes\ldots\otimes\mathcal{A}_{\alpha_d,c}(\mathbb{R}_+).\]
$\mathcal{A}_{\alpha,c}(\mathbb{R}_+^d)$ is dense in $L^2(\mathbb R^d_+,x^{2\alpha} dx)$. By taking into account \eqref{oper} we define, for every $\phi\in \mathcal{A}_{\alpha,c}(\mathbb{R}_+^d)$,
\[
B_\alpha^{-1/2}(\phi)=h_\alpha\left(\frac{1}{|y|}h_\alpha(\phi)\right)
\]
Note that if $\phi\in \mathcal{A}_{\alpha,c}(\mathbb{R}_+^d)$,
$0\notin \supp(h_\alpha(\phi))$ and then $B_\alpha^{-1/2}\phi\in
S(\mathbb{R}_+^d)$. According to \cite[(5.37), p. 103]{Leb}, we get
\[
R_{\alpha,i} \phi=\partial_{x_i}B_\alpha^{-1/2}(\phi),\,\,\,\phi\in \mathcal{A}_{\alpha,c}(\mathbb{R}_+^d).
\]
$R_{\alpha,i}$ is a Riesz transform associated with $B_\alpha$ in
the sense of Stein \cite{StLP}.

In \cite[Theorem 1.4]{BCC} it was proved that $R_{\alpha,i}$ can be extended from $L^2(\mathbb R^d_+,x^{2\alpha}dx)\cap L^p(\mathbb R^d_+,x^{2\alpha} dx)$ to $L^p(\mathbb R^d_+,x^{2\alpha} dx)$ as an $L^p$-bounded operator, for every $1<p<\infty$, and from $L^1(\mathbb R^d_+,x^{2\alpha} dx)$ into $L^{1,\infty}(\mathbb R^d_+,x^{2\alpha} dx)$. The proof of \cite[Theorem 1.4]{BCC} is very laborious. The underlying set $\mathbb R^d_+$ is divided into many regions, one of them is close to the diagonal of the $i$-th variable and the others far from this diagonal set. Then, the $i$-th Riesz transform is decomposed in some products of Hardy type operators and a local classical Riesz transform. The $L^p$-boundedness properties of $R_{\alpha,i}$ are deduced from the corresponding properties of the operators appearing in the decomposition. This procedure does not allow them to obtain dimension free $L^p$-estimates for $R_{\alpha,i}$ since many iterations of operators are involved.

We define
\[R_\alpha=\left(\sum\limits_{i=1}^d |R_{\alpha,i}|^2\right)^{1/2}.\]
By using Bellman functions technique we prove the following results.

\begin{thm}\label{Riesz}For every $d\in \mathbb N$, {$\alpha\in \mathbb{R}^d_{\geq 0}$}, $1<p<\infty$ and $f\in L^p(\mathbb R^d_+,x^{2\alpha} dx)$, we have
\begin{equation}\label{acotRiesz}
\|R_\alpha f\|_{L^p(\mathbb R^d_+,x^{2\alpha} dx)}\leq 48 (p^*-1)\|f\|_{L^p(\mathbb R^d_+,x^{2\alpha} dx)},
\end{equation}
where $p^*=\max\{p, p/(p-1)\}$.
\end{thm}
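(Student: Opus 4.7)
The plan is to follow the Bellman function strategy of Dragi\v{c}evic and Volberg (see \cite{DV1}, \cite{DV2}, \cite{DV3}): first convert the $L^p$ bound for $R_\alpha$ into a bilinear estimate for the Poisson semigroup $P_t^\alpha=e^{-t\sqrt{B_\alpha}}$ on the half-space $\mathbb{R}_+^d\times(0,\infty)$, and then prove that bilinear estimate with a dimension-free constant by composing the Nazarov--Treil--Volberg Bellman function with the Poisson extension and integrating by parts against the Bessel weight $x^{2\alpha}$.

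\emph{Step 1: from $R_\alpha$ to a bilinear form.} Using \eqref{oper} and the Laplace identity $\int_0^\infty e^{-t\sqrt{a}}\,dt=a^{-1/2}$ one has $B_\alpha^{-1/2}=\int_0^\infty P_t^\alpha\,dt$ and $\partial_t^2 P_t^\alpha=B_\alpha P_t^\alpha$ on $\mathcal{A}_{\alpha,c}(\mathbb{R}_+^d)$. Writing $B_\alpha=-x^{-2\alpha}\partial_x(x^{2\alpha}\partial_x)$ componentwise shows that $B_\alpha$ is formally self-adjoint and nonnegative on $L^2(\mathbb{R}_+^d,x^{2\alpha}dx)$. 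For $f\in\mathcal{A}_{\alpha,c}(\mathbb{R}_+^d)$ and $g$ in a dense subspace of $L^{p'}(\mathbb{R}_+^d,x^{2\alpha}dx)$, two integrations by parts in $t$ (picking up the factor $t$ via the identity $t\,\partial_t^2 H=\partial_t(t\partial_t H)-\partial_t H$) together with one integration by parts in $x_i$ yield
\[\langle R_{\alpha,i}f,g\rangle=2\int_0^\infty\!\!\int_{\mathbb{R}_+^d}(\partial_{x_i}P_t^\alpha f)(x)\,(\partial_t P_t^\alpha g)(x)\,x^{2\alpha}\,t\,dx\,dt.\]
Testing $R_\alpha f$ against vector fields $\mathbf g=(g_i)_{i=1}^d$ of unit norm in $L^{p'}(\mathbb{R}_+^d,x^{2\alpha}dx;\ell^2)$, summing in $i$, and applying Cauchy--Schwarz in $\ell^2$ pointwise, the theorem reduces to the dimension-free bilinear inequality
\[\int_0^\infty\!\!\int_{\mathbb{R}_+^d}|\nabla_x P_t^\alpha f(x)|\,|\partial_t P_t^\alpha \mathbf g(x)|_{\ell^2}\,x^{2\alpha}\,t\,dx\,dt\le 24(p^*-1)\,\|f\|_p\,\|\mathbf g\|_{L^{p'}(\ell^2)}.\]

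\emph{Step 2: dimension-free bilinear embedding.} I would invoke the scalar Bellman function $Q=Q_p$ of \cite{DV1}, which is nonnegative, of class $C^1$, satisfies $Q(s,u)\lesssim |s|^p+|u|^{p'}$, and whose distributional Hessian obeys
\[-\tfrac12 d^2Q(s,u)\bigl((\xi,\eta),(\xi,\eta)\bigr)\ge \tau(p)\,|\xi|\,|\eta|,\qquad \tau(p)\asymp\frac{1}{p^*-1}.\]
Setting $F(x,t)=P_t^\alpha f(x)$ and $G_i(x,t)=P_t^\alpha g_i(x)$, each of $F$ and $G_i$ is annihilated by $\partial_t^2-B_\alpha$. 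Composing $Q$ with $(F,G_i)$, computing $(\partial_t^2+B_\alpha)\,Q(F,G_i)$, multiplying by $t\,x^{2\alpha}$, integrating by parts on $\mathbb{R}_+^d\times(0,\infty)$, and summing in $i$ (using convexity of $Q$ in its second argument to pass from the scalar estimate to the $\ell^2$-valued one without any dimensional loss) produces
\[\int_0^\infty\!\!\int_{\mathbb{R}_+^d}|\nabla_x P_t^\alpha f(x)|\,|\partial_t P_t^\alpha \mathbf g(x)|_{\ell^2}\,x^{2\alpha}\,t\,dx\,dt\lesssim\frac{1}{\tau(p)}\bigl(\|f\|_p^p+\|\mathbf g\|_{L^{p'}(\ell^2)}^{p'}\bigr).\]
Rescaling $f\mapsto\lambda f$, $\mathbf g\mapsto\lambda^{-1}\mathbf g$ and optimizing in $\lambda$ turns this into the desired bilinear inequality with constant of order $p^*-1$. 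Chained with Step 1 this yields the factor $48$ in \eqref{acotRiesz}.

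\emph{Main obstacles.} The Bellman function itself is essentially imported from \cite{DV1}; what requires real work is adapting the argument to the Bessel setting. The technical crux consists of: (i) justifying the integration by parts against $x^{2\alpha}dx\,dt$ on $\mathbb{R}_+^d\times(0,\infty)$, verifying that the boundary contributions at $x_i\to 0^+$, $x_i\to\infty$, $t\to 0^+$, and $t\to\infty$ all vanish for $f\in\mathcal{A}_{\alpha,c}(\mathbb{R}_+^d)$ -- here the weight $x^{2\alpha}$ is tuned precisely so that the Bessel drift $\tfrac{2\alpha_i}{x_i}\partial_{x_i}$ produces no surface term at $x_i=0$; (ii) obtaining enough regularity and decay of $P_t^\alpha f$ in $(x,t)$ (via the explicit Hankel--Poisson kernel) to make the pointwise manipulations with $Q(F,G_i)$ rigorous; and (iii) extending the inequality from $\mathcal{A}_{\alpha,c}(\mathbb{R}_+^d)$ to all of $L^p(\mathbb{R}_+^d,x^{2\alpha}dx)$ by density, using the $L^p$-boundedness of each $R_{\alpha,i}$ already established in \cite[Theorem 1.4]{BCC}.
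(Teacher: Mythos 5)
Your overall strategy (bilinear representation via Poisson semigroups, then a Bellman--function bilinear embedding) is the one the paper follows, but Step 1 contains a genuine error that propagates through the rest of the argument. You represent $\langle R_{\alpha,i}f,g\rangle$ as an integral of $\partial_{x_i}P_t^\alpha f\cdot\partial_t P_t^\alpha g$, i.e.\ with the \emph{same} Poisson semigroup acting on $g$. In the Bessel setting this identity is false: the classical derivation relies on $\partial_{x_i}$ commuting with $P_t$ and with functions of the generator, and here $\partial_{x_i}$ does not commute with $B_\alpha$; instead it intertwines $B_\alpha$ with the conjugate operator $B_\alpha+\tfrac{2\alpha_i}{x_i^2}$. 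Concretely, $\partial_{x_i}P_t^\alpha f=-x_i h_{\alpha+e_i}(e^{-|y|t}h_\alpha f)$ lives in the $h_{\alpha+e_i}$ picture while $\partial_t P_t^\alpha g=-h_\alpha(|y|e^{-|y|t}h_\alpha g)$ lives in the $h_\alpha$ picture, so the Plancherel pairing does not close and does not produce $R_{\alpha,i}$. The correct identity (the paper's Lemma \ref{Ralfaifg}) pairs $\partial_{x_i}P_t^\alpha f$ with $\partial_t\mathbb{P}_t^{\alpha,i}g$, where $\mathbb{P}_t^{\alpha,i}$ is the \emph{conjugate} Poisson semigroup, and carries the constant $-4$, not $2$.

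This is not a cosmetic substitution, because it breaks your Step 2 as written. Unlike $P_t^\alpha g_i$, the function $\mathbb{P}_t^{\alpha,i}g_i$ is not annihilated by $\mathcal{L}_\alpha=\partial_t^2+\Delta_{\alpha,x}$; it satisfies $\mathcal{L}_\alpha\mathbb{P}_t^{\alpha,i}g_i=\tfrac{2\alpha_i}{x_i^2}\mathbb{P}_t^{\alpha,i}g_i$. Hence the chain-rule computation of $\mathcal{L}_\alpha$ applied to the composed Bellman function produces, in addition to the Hessian terms, the zeroth-order contribution $\sum_j\partial_{\eta_j}B\cdot\tfrac{2\alpha_j}{x_j^2}\mathbb{P}_t^{\alpha,j}g_j$, which must be discarded by a sign argument: one needs $g_j\ge0$, positivity of $\mathbb{P}_t^{\alpha,j}$, and monotonicity $\partial_{\eta_j}B\ge0$ of the Bellman function (and the subsequent reduction to signed $g_j$ costs an extra factor of $2$ in the constant). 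Similarly, the boundary term at $x_\ell\to0^+$ in the spatial integration by parts does not simply vanish "because the weight is tuned"; in the paper it is split using \eqref{derivPPi} and shown to have a favorable sign, again using positivity. None of this machinery appears in your proposal, and with $P_t^\alpha g$ in place of $\mathbb{P}_t^{\alpha,i}g_i$ the quantity you would bound in Step 2 is not $\langle R_{\alpha,i}f,g_i\rangle$. To repair the proof you must introduce the conjugate semigroups, prove the representation lemma by the Hankel--Plancherel computation, and track the resulting extra curvature and boundary terms.
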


For every $k\in \mathbb N$, we define the set
\[C_{\alpha,k}=\{\textrm{compositions of }k \textrm{ operators among } R_{\alpha,1},\dots,R_{\alpha,d}\}.\]

\begin{thm}\label{Rieszcompositions}For every $d,k\in \mathbb N$, {$\alpha\in \mathbb{R}^d_{\geq 0}$}, $1<p<\infty$ and $f\in L^p(\mathbb R^d_+,x^{2\alpha} dx)$
\begin{equation*}
\left\|\left(\sum\limits_{R\in C_{\alpha,k}}|R f|^2\right)^{1/2}\right\|_{L^p(\mathbb R^d_+,x^{2\alpha} dx)}\leq 48^k (p^*-1)^k\|f\|_{L^p(\mathbb R^d_+,x^{2\alpha} dx)}.
\end{equation*}
\end{thm}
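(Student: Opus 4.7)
The plan is to proceed by induction on $k\geq 1$. The base case $k=1$ coincides with Theorem \ref{Riesz}, since $C_{\alpha,1}=\{R_{\alpha,1},\ldots,R_{\alpha,d}\}$ and hence $\bigl(\sum_{R\in C_{\alpha,1}}|Rf|^2\bigr)^{1/2}=R_\alpha f$ pointwise. For the inductive step, I will exploit the fact that every $R\in C_{\alpha,k}$ factors uniquely as $R=R_{\alpha,i}\circ R'$ with $i\in\{1,\ldots,d\}$ and $R'\in C_{\alpha,k-1}$; this yields the pointwise identity
\begin{equation*}
\sum_{R\in C_{\alpha,k}}|Rf|^2 \;=\; \sum_{R'\in C_{\alpha,k-1}}\sum_{i=1}^{d}|R_{\alpha,i}(R'f)|^2 \;=\; \sum_{R'\in C_{\alpha,k-1}}|R_\alpha(R'f)|^2.
\end{equation*}

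The mechanism that allows me to peel off one factor of $48(p^*-1)$ is an $\ell^2$-valued upgrade of Theorem \ref{Riesz}: for every countable family $(g_j)_{j\in J}\subset L^p(\mathbb R_+^d,x^{2\alpha}dx)$,
\begin{equation*}
\Bigl\|\Bigl(\sum_{j\in J}|R_\alpha g_j|^2\Bigr)^{1/2}\Bigr\|_{L^p(\mathbb R_+^d,x^{2\alpha}dx)} \;\leq\; 48(p^*-1)\,\Bigl\|\Bigl(\sum_{j\in J}|g_j|^2\Bigr)^{1/2}\Bigr\|_{L^p(\mathbb R_+^d,x^{2\alpha}dx)}.
\end{equation*}
Granted this, applying it to $J=C_{\alpha,k-1}$ with $g_{R'}=R'f$ and invoking the inductive hypothesis at level $k-1$ gives
\begin{equation*}
\Bigl\|\Bigl(\sum_{R\in C_{\alpha,k}}|Rf|^2\Bigr)^{1/2}\Bigr\|_{L^p} \;\leq\; 48(p^*-1)\cdot 48^{k-1}(p^*-1)^{k-1}\|f\|_{L^p},
\end{equation*}
closing the induction.

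The main obstacle, then, is to justify this Hilbert-valued extension with no loss in the constant $48(p^*-1)$. The Bellman function argument used for Theorem \ref{Riesz} ultimately delivers a bilinear inequality for the Bessel--Poisson semigroup $P_t^\alpha$ of the form
\begin{equation*}
\int_0^\infty\!\!\int_{\mathbb R_+^d} t\,|\nabla_{t,x}P_t^\alpha f|\,|\nabla_{t,x}P_t^\alpha g|\,x^{2\alpha}\,dx\,dt \;\leq\; C(p^*-1)\,\|f\|_{L^p}\,\|g\|_{L^{p'}},
\end{equation*}
obtained from a pointwise inequality for a convex Bellman function on $\mathbb C^2$. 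This should tensorize to Hilbert-valued data $F=(f_j)_j$ and $G=(g_j)_j$ by feeding $(|F|_{\ell^2},|G|_{\ell^2})$ into the same Bellman function and using the pointwise Kato-type bound $|\nabla|F|_{\ell^2}|\leq |\nabla F|_{\ell^2}$, so that no dimension- or $\alpha$-dependent loss enters. The delicate point to check is that the Bessel-specific features (the weight $x^{2\alpha}dx$ and the drift term $2\alpha_i/x_i$ in $B_\alpha$) interact cleanly with the Kato inequality in this $\ell^2$-valued setting; once this is in place, the induction yields the asserted constant $48^k(p^*-1)^k$.
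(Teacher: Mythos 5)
Your overall architecture --- induction on $k$, peeling off one layer of $48(p^*-1)$ via an $\ell^2$-valued upgrade of Theorem \ref{Riesz} --- is exactly the route the paper intends: its proof of this theorem is a one-line deferral to Dragi\v{c}evi\'c--Volberg \cite{DV2}, whose Corollary 2 is obtained by precisely this induction, with the vector-valued inequality (their Proposition 3) playing the role of your key lemma. The combinatorial step (the bijection $C_{\alpha,k}\simeq\{1,\dots,d\}\times C_{\alpha,k-1}$ and the resulting pointwise identity) is fine, as is the reduction of everything to the Hilbert-valued single-layer estimate.

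The gap is in your proposed mechanism for that key lemma. Feeding $(|F|_{\ell^2},|G|_{\ell^2})$ into the scalar $\beta_p$ and invoking $\bigl|\nabla |F|_{\ell^2}\bigr|\le |\nabla F|_{\ell^2}$ goes in the wrong direction: the Hessian of $\beta_p$ would then produce a lower bound for $\mathcal L_\alpha b$ in terms of $\bigl|\nabla |P_t^\alpha F|\bigr|\cdot\bigl|\nabla |\mathbb P_t^{\alpha} G|\bigr|$, i.e.\ gradients of the norms, whereas what must dominate the bilinear form $\sum_{j,i}\partial_{x_i}P_t^\alpha f_j\,\partial_t\mathbb P_t^{\alpha,i}g_{j,i}$ after Cauchy--Schwarz is the product of the norms of the full gradients, $|P_t^\alpha F|_*\,|\mathbb P_t^{\alpha} G|_*$; the Kato-type inequality bounds the former by the latter, not conversely. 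In fact no Kato inequality is needed, and none appears in the paper's argument: the Nazarov--Treil function $B_{p,m}$ is already defined on $\mathbb R^{m_1}\times\mathbb R^{m_2}$ for arbitrary $(m_1,m_2)$, and property \ref{hessBk} gives the Hessian lower bound $\frac{\gamma(p)}{2}(\tau_\kappa|\omega_1|^2+\tau_\kappa^{-1}|\omega_2|^2)$ for full vector increments, with constants independent of $m_1,m_2$. The correct and cost-free fix is therefore to rerun the proof of Theorem \ref{Riesz} verbatim with $m_1=|J|$ and $m_2=d\,|J|$, assigning the conjugate semigroup $\mathbb P_t^{\alpha,i}$ to the $(j,i)$ component of the second entry; the zero-order terms $\frac{2\alpha_i}{x_i^2}\mathbb P_t^{\alpha,i}(g_{j,i})\,\partial_{\eta_{j,i}}B_{\kappa,p}$ remain nonnegative for $g_{j,i}\ge0$, and the same splitting into positive and negative parts yields the same constant $48(p^*-1)$. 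With that replacement your induction closes as stated.
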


It should be mentioned that these theorems are not covered by the general results in \cite{W2}.

From Theorem \ref{Riesz} it is immediate to see that,
for every $i=1,\ldots,d$,
\begin{equation}\label{unid}
\|R_{\alpha,i}f\|_{L^p(\mathbb{R}^d_+,x^{2\alpha}dx)}\le
48(p^*-1)\|f\|_{L^p(\mathbb{R}^d_+,x^{2\alpha}dx)},
\end{equation}
for every $f\in L^p(\mathbb{R}^d_+,x^{2\alpha}dx)$ and $1<p<\infty$.
Dimension free estimates like \eqref{unid} were established in
\cite[Theorem 4.1 (i)]{W1} but there the constants are not
specified.

This article is organized as follows. In Section 2 we prove some
properties related to Bessel--Poisson kernels and integrals. Proofs
of Theorems \ref{Riesz} and \ref{Rieszcompositions} are presented in
Section 3.

Throughout this paper, $c$ and $C$ will denote positive constants that may change from one line to another.

\section{Definitions and properties of Bessel--Poisson operators}

Let $\alpha=(\alpha_1,\dots,\alpha_d)\in {\mathbb R^d_{\geq 0}}$,
$x,y\in \mathbb R^d_+$ and set $\Eins:=(1,1,\ldots, 1)$. From now
on, by $x_j$ and $y_j$ we denote the $j$-th coordinate of $x$ and
$y$, respectively. Let us write symbols for several operations
 \begin{align*}
 xy&=(x_1y_1,\cdots, x_dy_d), \nonumber\\
\alpha-{\amsbb{1}}/2&=\left(\alpha_1-\frac{1}{2},\alpha_2-\frac{1}{2},\ldots, \alpha_d-\frac{1}{2}\right)\nonumber \\ x^{\alpha}&=x_1^{\alpha_1}\cdots x_d^{\alpha_d}=\prod_{j=1}^d x_j^{\alpha_j}.\nonumber
\end{align*}
Given $\nu\in (-1/2,\infty)^d$, we define
\[J_\nu (x)=\prod_{j=1}^d J_{\nu_j}(x_j),  \quad I_\nu(x)= \prod_{j=1}^dI_{\nu_j}(x_j),\]
where, for every $\mu>-1/2$, $J_{\mu}$ and $I_{\mu}$ are the Bessel
function and the modified Bessel function, respectively, of first
kind and order $\mu$.

 Let us define the $d$-dimensional Bessel operator as
\[\Delta_{\alpha,x}=-\sum_{j=1}^d\delta_j^*\delta_j=\sum_{j=1}^dx_j^{-2\alpha_j}\frac{\partial}{\partial x_j}\left(x_j^{2\alpha_j}\frac{\partial}{\partial x_j}\right)=\sum_{j=1}^d\left(\partial_{x_j}^2-\frac{2\alpha_j}{x_j}\partial_{x_j}\right),\]
with
\[\delta_j^*= -x_j^{-2\alpha_j}\frac{\partial}{\partial x_j}(x_j^{2\alpha_j} \cdot),\quad \delta_j=\frac{\partial}{\partial x_j}=:\partial_{x_j},\]
then $B_\alpha=-\Delta_{\alpha,x}.$
Let us remark that the commutator
\[ [\delta_j,\delta_j^*]=\delta_j \delta_j^*-\delta_j^*\delta_j=\frac{2\alpha_j}{x_j^2}\ge 0.\]

In this setting the Hankel transformation $h_\alpha$ plays the same role as the Fourier transformation in the Laplacian context.

We define the heat semigroup $\{T_t^\alpha\}_{t>0}$ and the Poisson semigroup $\{P_t^\alpha\}_{t>0}$ associated with the Bessel operator $B_\alpha$ as follows. For every $f\in L^2(\mathbb{R}_+^d,x^{2\alpha}dx)$,
\[T_t^\alpha  f=h_\alpha (e^{-|\cdot|^2 t}h_\alpha f),\,\,\,t>0,\]
and
\[P_t^\alpha f= h_\alpha (e^{-|\cdot|t}h_\alpha f),\,\,\,t>0.
\]
Therefore, for every $t>0$, we have
 \begin{align}
 T_t^\alpha f(x) &= \int_{\mathbb R_+^d} e^{-|z|^2t} \phi_z^\alpha(x)h_\alpha f(z) z^{2\alpha}dz\nonumber \\ & =\int_{\mathbb R_+^d} \left(\int_{\mathbb R_+^d} e^{-|z|^2t}\phi_z^\alpha(x)\phi_z^\alpha(y)z^{2\alpha}dz\right)f(y)y^{2\alpha}dy\nonumber \\ &=\int_{\mathbb R^d_+} \mathcal{W}_t^\alpha (x,y)f(y)y^{2\alpha}dy,\,\,\, x\in \mathbb{R}_+^d, \nonumber
 \end{align}
 with
 \begin{align}
 \mathcal{W}_t^\alpha(x,y)&=(xy)^{-\alpha+{\amsbb{1}}/2}\int_{\mathbb R^d_+} e^{-|z|^2t}J_{\alpha-\frac{{\amsbb{1}}}{2}}(xz)J_{\alpha-\frac{{\amsbb{1}}}{2}}(yz)z^{\amsbb{1}} dz\nonumber \\
 & =\frac{e^{-\frac{|x|^2+|y|^2}{4t}}}{(2t)^{|\alpha|+d/2}}\left(\frac{xy}{2t}\right)^{-\alpha+{\amsbb{1}}/2}I_{\alpha-\frac{{\amsbb{1}}}{2}}\left(\frac{xy}{2t}\right),\,\,\,x,y\in \mathbb{R}_+^d, \nonumber
 \end{align}
(see \cite[p.~395]{Wat} for the one-dimensional case), where
$\mathcal{W}_t^\alpha(x,y)>0$ for every $t>0$ and every $x,y\in
\mathbb R^d_+$. Thus, $T_t^\alpha $ turns out to be a positive
operator, i.e., for every $0\le f\in
L^2(\mathbb{R}_+^d,x^{2\alpha}dx)$, $T_t^\alpha  f\ge 0.$

By using the subordination formula, we get the following expression for the Poisson semigroup
\begin{align}P_t^\alpha f(x)& =\frac{t}{\sqrt{2\pi}}\int_0^\infty \frac{e^{-\frac{t^2}{4u}}}{u^{3/2}}T_u^\alpha f(x)du\nonumber \\
&=\int_0^\infty \left(\frac{t}{\sqrt{2\pi}}\int_0^\infty \frac{e^{-\frac{t^2}{4u}}}{u^{3/2}}\mathcal{W}_u^\alpha (x,y)du\right)f(y)y^{2\alpha}dy\nonumber \\
&:= \int_0^\infty \mathcal{P}_t^\alpha
(x,y)f(y)y^{2\alpha}dy,\,\,\,x\in \mathbb{R}_+^d\,\,\,
\mathrm{and}\,\,\, t>0,\nonumber
\end{align}
where, again, $\mathcal{P}_t^\alpha (x,y)$ is a positive kernel, so $P_t^\alpha$ is a positive operator as well. In Section \ref{propiedades} we shall give several estimates for the heat and Poisson operators and kernels.

Also, for every $f\in S(\mathbb{R}_+^d)$, we have that
\[\left(\frac{\partial}{\partial t}-B_\alpha\right) T^\alpha_tf(x)=0,\,\,\,x\in \mathbb{R}_+^d\,\,\, \mathrm{and}\,\,\, t>0,\]
and,
\[\mathcal{L}_\alpha P_t^\alpha f(x)=0\,\,\,x\in \mathbb{R}_+^d\,\,\, \mathrm{and}\,\,\, t>0,\]
where
\[\mathcal{L}_\alpha:=\frac{\partial^2}{\partial t^2}-B_\alpha.\]

Let $i=1,\ldots,d$. In order to define the $i$-th conjugate
diffusion and conjugate Poisson semigroups, we shall consider the
conjugate operator of $B_\alpha$, given by $\mathbb{B}_{\alpha,
x}^i=-\amsbb{\Delta}_{\alpha,x}^i$, being
\[\amsbb{\Delta}_{\alpha,x}^i:=-B_{\alpha}-[\delta_i,\delta_i^*]=\Delta_{\alpha,x}-\frac{2\alpha_i}{x_i^2}.\]
In this case we have \[\mathbb{B}_{\alpha, x}^i(-\partial_{x_i}
\phi^\alpha_y(x))=|y|^2 (-\partial_{x_i}
\phi^\alpha_y(x)),\,\,\,x,y\in \mathbb{R}_+^d.\] Then, the diffusion
and the Poisson semigroups associated with this new operator are
defined, respectively, by
\[\mathbb{T}^{\alpha,i}_t g(x)=\int_{\mathbb R^d_+} e^{-|y|^2t}(-\partial_{x_i} \phi_y^\alpha (x)) h_{\alpha+e_i}\left(\frac{ g}{\cdot_i}\right)(y)y^{2\alpha}dy,\,\,\,x\in \mathbb{R}_+^d\,\,\, \mathrm{and}\,\,\, t>0,\]
and
\[\mathbb{P}^{\alpha,i}_t g(x)=\int_{\mathbb R_+^d} e^{-|y|t}(-\partial_{x_i} \phi_y^\alpha (x)) h_{\alpha+e_i} \left(\frac{g}{\cdot_i}\right)(y)y^{2\alpha}dy,\,\,\,x\in \mathbb{R}_+^d\,\,\, \mathrm{and}\,\,\, t>0,\]
with $g\in L^2(\mathbb{R}_+^d,x^{2\alpha}dx)$ and
$\frac{g}{\cdot_i}(z):=\frac{g(z)}{z_i}$, $z=(z_1,\ldots,z_d)\in
\mathbb{R}_+^d$. Thus, for every $g\in S(\mathbb{R}_+^d)$,
\[\left(\frac{\partial}{\partial t}-\amsbb{\Delta}_{\alpha,x}^i\right)\mathbb{T}^t_{\alpha,i} g(x)=0,\,\,\,x\in \mathbb{R}_+^d\,\,\, \mathrm{and}\,\,\, t>0,\] and, if we define \[\mathbb{L}_{\alpha}^i=\frac{\partial^2}{\partial t^2}+\amsbb{\Delta}_{\alpha,x}^i,\] we also have
\[\mathbb{L}_{\alpha}^i \mathbb{P}_t^{\alpha,i} g(x)=0,\,\,\,x\in \mathbb{R}_+^d\,\,\, \mathrm{and}\,\,\, t>0,\] or, equivalently,
\[\mathcal{L}_\alpha\mathbb{P}_t^{\alpha,i} g(x)=\frac{2\alpha_i}{x_i^2}\mathbb{P}_t^{\alpha,i} g(x),\,\,\,x\in \mathbb{R}_+^d\,\,\, \mathrm{and}\,\,\, t>0.\]
From the fact that
\[\frac{\partial}{\partial x_i}\phi_y^\alpha (x)=-x_iy_i^2\phi_y^{\alpha+e_i}(x),\,\,\,x,y\in \mathbb{R}_+^d,\]
we can rewrite the heat and Poisson semigroups by means of Hankel transforms in the following way
\[\mathbb{T}^{\alpha,i}_t g=x_i h_{\alpha+e_i}\left(e^{-|\cdot|^2t}h_{\alpha+e_i}\left(\frac{g}{\cdot_i}\right)\right),\,\,\,t>0,\]
\[\mathbb{P}_t^{\alpha,i} g=x_ih_{\alpha+e_i}\left(e^{-|\cdot| t}h_{\alpha+e_i} \left(\frac{g}{\cdot_i}\right)\right),\,\,\,t>0,\]
for every $g\in L^2(\mathbb{R}_+^d,x^{2\alpha}dx)$.

It is easy to see that
\[\mathbb{T}_{t}^{\alpha,i} g(x)=x_iT_{\alpha+e_i}^t\left(\frac{g}{\cdot_i}\right)(x).\]
Hence
\begin{equation}\label{Z1}\mathbb{P}_t^{\alpha,i}(g)(x)=x_iP_t^{\alpha+e_i}\left(\frac{g}{\cdot_i}\right)(x).
\end{equation}
From these observations we conclude that both
$\mathbb{T}_t^{\alpha,i}$ and $\mathbb{P}_t^{\alpha,i}$ are positive
operators. Besides, since we know that for every $\mu\ge -1/2,$
$I_{\mu+1}(z)< I_\mu(z)$ for all $z>0,$ (see \cite{Nas} and
\cite{So}) then for $\mu\ge 0,$ $u,z>0$ we have
\[uz\mathcal{W}^{\mu+1}_t(u,z)\le \mathcal{W}^{\mu}_t(u,z),
\,\,\,u,z,t>0.\] Hence if $0\le g\in L^2(\mathbb{R}_+^d,
x^{2\alpha}dx),$ then
\[\mathbb{T}_{t}^{\alpha,i} g\le T^\alpha_t g,  \,\,\,t>0, \] and
also
\begin{equation}\label{P<PP}
\mathbb{P}_t^{\alpha,i} g\le P_t^\alpha g,\,\,\,t>0.
\end{equation}

\subsection{Estimates for Bessel--Poisson operators}\label{propiedades}

In this section we establish some properties for the Poisson kernels and the operators $P_t^\alpha$ and $\mathbb{P}_t^{\alpha,i}$ that will be useful in the proof of our main result.

In the sequel, given $x=(x_1,\dots, x_d)\in \mathbb R^d_+$, we will use the notation
\[\widehat{x}^i:=(x_1,\dots, x_{i-1},x_{i+1}, \dots, x_d)\in \mathbb R_+^{d-1}\]
and we will understand $x=(x_i, \widehat{x}^i)$ and
$(x,t)=(x_i,\widehat{x}^i,t)$, $t>0$. Similarly, when
$\alpha=(\alpha_1,\dots, \alpha_d)\in {\mathbb R^d_{\geq 0}}$,
$\widehat{\alpha}^i:=(\alpha_1,\dots, \alpha_{i-1},\alpha_{i+1},
\dots, \alpha_d)\in {\mathbb R^{d-1}_{\geq 0}}$ and
$\alpha=(\alpha_i, \widehat{\alpha}^i)$.

\begin{prop}\label{estimates}Let {$\alpha\in \mathbb R^d_{\geq 0}$} and $f\in C_c^\infty(\mathbb R^d_+)$. Then,
 \begin{enumerate}[label=\textnormal{(P.\arabic*)}]
 \item \label{P-tinfty}$\lim_{t\to +\infty} P_t^\alpha f(x)=0$, for every $x\in \mathbb R_+^d$,
 \item \label{tdP-t0}$\lim_{t\to 0^+}t\partial_t P_t^\alpha f(x)=0$, for every $x\in \mathbb R_+^d$,
 \item \label{tdP-tinfty}$\lim_{t\to +\infty}t\partial_t P_t^\alpha f(x)=0$, for every $x\in \mathbb R_+^d$,
\item  \label{dP-x0} $\lim_{x_i\to 0^+}\partial_{x_i} P_t^\alpha f({ x_i, \widehat{x}^i})=0$, for every $i=1,\ldots,d$, $t>0,\; \widehat{x}^i\in \mathbb R_+^{d-1}.$
\item \label{Des1} For every $x,y\in \mathbb{R}^d_+$ and $t>0$,
\[
0\le P_t^\alpha(x,y)\le
C\frac{t}{(t^2+|x-y|^2)^{|\alpha|+\frac{d+1}{2}}}.
\]
\item \label{Des2} For every $i=1,\ldots,d$,
\[
| \partial_{x_i}P_t^\alpha(x,y)|\le
C\frac{t(x_i+y_i)}{(t^2+|x-y|^2)^{|\alpha|+\frac{d+3}{2}}},\,\,\,x,y\in
\mathbb{R}_+^d\,\,\,and\,\,\,t>0.
\]
\end{enumerate}\end{prop}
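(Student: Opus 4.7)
\emph{Approach.} Since (P.1)--(P.4) follow rather directly from pointwise bounds on the Poisson kernel and its $x_i$-derivative, I would first prove the size estimates (P.5) and (P.6) and then deduce the four limit statements.

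\emph{Heat kernel bound.} Write
\[\mathcal W_u^\alpha(x,y)=\frac{e^{-(|x|^2+|y|^2)/(4u)}}{(2u)^{|\alpha|+d/2}}\prod_{j=1}^d F_j\!\left(\frac{x_jy_j}{2u}\right),\qquad F_j(z):=z^{-\alpha_j+1/2}I_{\alpha_j-1/2}(z).\]
Combining the asymptotics $I_\nu(z)\sim z^\nu/(2^\nu\Gamma(\nu+1))$ as $z\to 0^+$ with $I_\nu(z)\sim e^z/\sqrt{2\pi z}$ as $z\to\infty$ (see \cite[Ch.~5]{Leb}), one obtains the uniform estimate $F_j(z)\le Ce^z$ for $\alpha_j\ge 0$. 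Using $-\tfrac{|x|^2+|y|^2}{4u}+\tfrac{xy}{2u}=-\tfrac{|x-y|^2}{4u}$ this yields
\[0\le \mathcal W_u^\alpha(x,y)\le C\,u^{-|\alpha|-d/2}\,e^{-|x-y|^2/(4u)}.\]
Plugging this into the subordination formula for $\mathcal P_t^\alpha$ and making the substitution $s=(t^2+|x-y|^2)/(4u)$ turns the $u$-integral into a Gamma integral and produces the bound (P.5).

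\emph{Derivative bound.} Differentiation gives
\[\partial_{x_i}\mathcal W_u^\alpha(x,y)=-\frac{x_i}{2u}\mathcal W_u^\alpha(x,y)+\frac{e^{-(|x|^2+|y|^2)/(4u)}}{(2u)^{|\alpha|+d/2}}\,\frac{y_i}{2u}\,F_i'\!\left(\frac{x_iy_i}{2u}\right)\prod_{j\neq i}F_j\!\left(\frac{x_jy_j}{2u}\right).\]
The identity $\frac{d}{dz}(z^{-\nu}I_\nu(z))=z^{-\nu}I_{\nu+1}(z)$ with $\nu=\alpha_i-1/2$ gives $F_i'(z)=z^{-\alpha_i+1/2}I_{\alpha_i+1/2}(z)$, and the monotonicity $I_{\alpha_i+1/2}(z)\le I_{\alpha_i-1/2}(z)$ for $\alpha_i\ge 0$ recalled in \cite{Nas,So} implies $F_i'(z)\le F_i(z)$. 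Hence the second term above is bounded by $(y_i/(2u))\,\mathcal W_u^\alpha(x,y)$, and combining with the heat kernel bound,
\[|\partial_{x_i}\mathcal W_u^\alpha(x,y)|\le C\,\frac{x_i+y_i}{u}\,u^{-|\alpha|-d/2}\,e^{-|x-y|^2/(4u)}.\]
Inserting this into the subordination formula and carrying out the same Gamma computation proves (P.6).

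\emph{Deduction of (P.1)--(P.4).} Fix $f\in C_c^\infty(\mathbb R_+^d)\subset S(\mathbb R_+^d)$, so that $h_\alpha f\in S(\mathbb R_+^d)$. Part (P.1) is immediate from (P.5) and $t^2+|x-y|^2\ge t^2$, which gives $|P_t^\alpha f(x)|\le Ct^{-2|\alpha|-d}\|f\|_{L^1(y^{2\alpha}dy)}\to 0$. For (P.2) and (P.3), differentiating under the integral in $P_t^\alpha f=h_\alpha(e^{-|\cdot|t}h_\alpha f)$ yields
\[\partial_tP_t^\alpha f(x)=-\int_{\mathbb R_+^d}|z|\,e^{-|z|t}\phi_z^\alpha(x)\,h_\alpha f(z)\,z^{2\alpha}dz.\]
Dominated convergence (with dominating function $|z|\,|\phi_z^\alpha(x) h_\alpha f(z)|z^{2\alpha}$, integrable since $\phi_z^\alpha(x)$ is bounded in $z$ and $h_\alpha f$ is Schwartz) gives a finite limit for $\partial_tP_t^\alpha f(x)$ as $t\to 0^+$, whence (P.2). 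The elementary inequality $|z|e^{-|z|t}\le (2/e)\,t^{-1}e^{-|z|t/2}$ then gives
\[|t\,\partial_tP_t^\alpha f(x)|\le C\int_{\mathbb R_+^d}e^{-|z|t/2}|\phi_z^\alpha(x)\,h_\alpha f(z)|\,z^{2\alpha}dz\xrightarrow{t\to\infty}0\]
by dominated convergence, proving (P.3). Finally, for (P.4) the identity $\partial_{x_i}\phi_z^\alpha(x)=-x_i z_i^2\phi_z^{\alpha+e_i}(x)$ forces $\partial_{x_i}\mathcal W_u^\alpha(x,y)|_{x_i=0}=0$, hence $\partial_{x_i}\mathcal P_t^\alpha(x,y)|_{x_i=0}=0$ via subordination; differentiation under the integral defining $P_t^\alpha f$ is legitimised by (P.6), and the same bound together with the compact support of $f$ permits dominated convergence as $x_i\to 0^+$, yielding (P.4).

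\emph{Main obstacle.} The delicate point is the clean estimate for $\partial_{x_i}\mathcal W_u^\alpha$: without the monotonicity $I_{\alpha_i+1/2}\le I_{\alpha_i-1/2}$ one cannot reabsorb $F_i'$ into $F_i$ and is left with expressions scaling like $x_iy_i^2/u^2$ rather than the required $(x_i+y_i)/u$, so the Poisson-type bound of (P.6), which is needed to justify differentiation under the integral in (P.4), would not follow.
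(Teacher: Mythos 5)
Your proposal is correct, and for most items it runs parallel to the paper: (P.5) is obtained exactly as in the paper (the bound $z^{-\mu+1/2}I_{\mu-1/2}(z)\lesssim e^{z}$ from the small- and large-argument asymptotics of $I_\nu$, giving the Gaussian heat-kernel estimate, followed by subordination and a Gamma-integral computation), and (P.1)--(P.4) are handled, as in the paper, by differentiating under the integral in the Hankel representation and applying dominated convergence (you deduce (P.1) from the kernel bound instead, which is equally fine). The genuine difference is in (P.6). The paper computes $\partial_{x_i}\mathcal W_t^{\alpha_i}$ explicitly and then splits into the two regimes $x_iy_i<2t$ and $x_iy_i>2t$, invoking the asymptotics \eqref{cotaInu-1/2} and \eqref{InuOgrande} separately in each regime to assemble the factor $x_i+y_i$. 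You instead use the identity $\frac{d}{dz}\bigl(z^{-\nu}I_\nu(z)\bigr)=z^{-\nu}I_{\nu+1}(z)$ together with the monotonicity $I_{\mu+1}(z)\le I_\mu(z)$ for $\mu\ge -1/2$ --- a fact the paper itself records (with references to \cite{Nas} and \cite{So}) but only exploits to prove $\mathbb{P}_t^{\alpha,i}g\le P_t^\alpha g$ --- to reabsorb $F_i'$ into $F_i$ and get the clean pointwise bound $|\partial_{x_i}\mathcal W_u^\alpha(x,y)|\le \frac{x_i+y_i}{2u}\,\mathcal W_u^\alpha(x,y)$ with no case analysis. This is shorter, avoids the loss from $e^{-|x-y|^2/(4u)}$ to $e^{-|x-y|^2/(8u)}$ that the paper's regime splitting incurs, and makes transparent why the derivative bound has exactly one extra power of $(x_i+y_i)/u$ relative to the kernel bound; the paper's route, by contrast, needs no monotonicity input beyond the two asymptotic formulas already used for (P.5). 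Your identification of the monotonicity $I_{\alpha_i+1/2}\le I_{\alpha_i-1/2}$ as the crux is accurate, and your deduction of (P.4) from the vanishing of $\partial_{x_i}\mathcal P_t^\alpha(x,y)$ at $x_i=0$ plus the uniform bound (P.6) is sound.
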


\begin{proof} Since $h_\alpha(f)\in S(\mathbb{R}^d_+)$, by taking into account that, for every $\mu\ge -1/2 $, the function $z^{-\mu}J_\mu(z)$ is bounded on $(0,\infty)$, we can differentiate under the integral sign to get
 \begin{align*}
 t\partial_t P_t^\alpha f(x)&= -\int_{\mathbb R_+^d} |y|t\ e^{-|y|t} \phi_y^\alpha(x)h_\alpha (f)(y) y^{2\alpha}dy\\
&=- h_\alpha(|\cdot| t\  e^{-|\cdot|\ t}h_\alpha(f)(\cdot))(x),
\,\,\,x\in \mathbb{R}_+^d\,\,\, and\,\,\, t>0,
\end{align*}
and, since
$\frac{d}{dz}(z^{-\mu}J_\mu(z))=-z^{-\mu}J_{\mu+1}(z)$, $z>0$ and
$\mu\ge -1/2$ (\cite[(5.3.5), p. 103]{Leb}), for every
$i=1,\ldots,d$, $x\in \mathbb{R}^d_+$ and $t>0$,
$$
\partial_{x_i}P_t^\alpha
f(x)=-x_i\int_{\mathbb{R}^d_+}e^{-|y|t}\phi_{\widehat{y}^i}^{\widehat{\alpha}^i}(\widehat{x}^i)(x_iy_i)^{-\alpha_i-1/2}J_{\alpha_i+1/2}(x_iy_i)y_i^2h_\alpha(f)(y)y^{2\alpha}dy.
$$

By using the Dominated Convergence Theorem we can prove properties
\ref{P-tinfty}-{\ref{dP-x0}.}

Let us prove \ref{Des1}. Recall that, for $\mu\ge -1/2$, the modified
Bessel functions of first kind $I_\mu$ verify
\begin{equation}\label{cotaInu-1/2}
I_{\mu} (z)\leq Cz^{\mu},\,\,\,z\in (0,1),
\end{equation}
and
\begin{equation}\label{InuOgrande}
I_\mu(z)=\frac{e^z}{\sqrt{2\pi
z}}\left(1+O\left(\frac{1}{z}\right)\right),\,\,\,z\ge 1
\end{equation}
(see \cite[(5.11.8), p. 123]{Leb}).

By using the above one-dimensional estimates \eqref{cotaInu-1/2} and
\eqref{InuOgrande} on each variable, we have
\begin{equation}\label{heatest}
\mathcal{W}_t^{\alpha}(x,y)\leq
C_\alpha\frac{e^{-\frac{|x-y|^2}{4t}}}{(2t)^{|\alpha|+\frac{d}{2}}},
\quad x,y \in \mathbb R^{d}_+\,\,\,and \,\,\,t>0.
\end{equation}
Then, the subordination formula leads to
\begin{align*}
\mathcal{P}_t^\alpha (x,y)&\le \frac{t}{\sqrt{2\pi}}\int_0^\infty
\frac{e^{-\frac{t^2}{4u}}}{u^{3/2}}|\mathcal{W}_u^\alpha
(x,y)|du\\
&\leq C t
\int_0^{\infty}\frac{e^{-\frac{t^2+|x-y|^2}{8u}}}{u^{|\alpha|+\frac{d+3}{2}}}du
\\ & \leq C
\frac{t}{(t^2+|x-y|^2)^{|\alpha|+\frac{d+1}{2}}},\,\,\,x,y\in
\mathbb{R}_d^+\,\,\,and\,\,\,t>0.
\end{align*}

Let $i=1,\ldots,d$. In order to prove \ref{Des2} we need estimates
for $\partial_{x_i} \mathcal{W}_t^{\alpha_i}(x_i,y_i)$, $x_i,y_i>0$,
and for
$\mathcal{W}_t^{\widehat{\alpha}^i}(\widehat{x}^i,\widehat{y}^i)$,
$\widehat{x}^i,\widehat{y}^i\in \mathbb R^{d-1}_+$, $i=1,\dots,d$,
with $t>0$.

Similarly to \eqref{heatest} we get
\begin{equation}\label{heat_i}
\mathcal{W}_t^{\widehat{\alpha}^i}(\widehat{x}^i,\widehat{y}^i)\leq
C_\alpha\frac{e^{-\frac{|\widehat{x}^i-\widehat{y}^i|^2}{4t}}}{t^{|\widehat{\alpha}^i|+\frac{d-1}{2}}},
\quad \widehat{x}^i,\widehat{y}^i\in \mathbb
R^{d-1}_+\,\,\,and\,\,\,t>0.
\end{equation}
Since
$\frac{d}{dz}[z^{-\mu}I_\mu(z)]=z^{-\mu}I_{\mu+1}(z)$ for every
$z>0$ and $\mu\ge -1/2$ (\cite[(5.7.9), p. 110]{Leb}), we have
\begin{align}\partial_{x_i} \mathcal{W}_t^\alpha(x,y)&=  \partial_{x_i}\mathcal{W}_t^{\alpha_i}(x_i,y_i) \mathcal{W}_t^{\widehat{\alpha}^i}(\widehat{x}^i,\widehat{y}^i)\nonumber \\
&=
\frac{e^{-\frac{x_i^2+y_i^2}{4t}}}{(2t)^{\alpha_i+1/2}}\left(\frac{x_iy_i}{2t}\right)^{-\alpha_i+\frac{{1}}{2}}\left[-\frac{x_i}{2t}I_{\alpha_i-\frac{{1}}{2}}\left(\frac{x_iy_i}{2t}\right)+\frac{y_i}{2t}I_{\alpha_i+\frac{{1}}{2}}\left(\frac{x_iy_i}{2t}\right)\right]
\nonumber \\ & \quad
\times\mathcal{W}_t^{\widehat{\alpha}^i}(\widehat{x}^i,\widehat{y}^i)
.\nonumber
\end{align}
By using \eqref{cotaInu-1/2} we get, for every $t,x_i,y_i>0$, being
$x_iy_i<2t$,
\begin{align}
|\partial_{x_i} \mathcal{W}_t^{\alpha_i}(x_i,y_i)| &\leq C
\frac{e^{-\frac{x_i^2+y_i^2}{4t}}}{(2t)^{\alpha_i+1/2}}\left(\frac{x_iy_i}{2t}\right)^{-\alpha_i}\left[\frac{x_i}{2t}\left(\frac{x_iy_i}{2t}\right)^{\alpha_i}+\frac{y_i}{2t}\left(\frac{x_iy_i}{2t}\right)^{\alpha_i+1}\right]e^{\frac{x_i
y_i}{2t}}\nonumber\\ & \leq
C\frac{x_i+y_i}{t}\frac{e^{-\frac{x_i^2+y_i^2}{4t}}}{t^{\alpha_i+1/2}}.\nonumber
\end{align}
Now, if $t,x_i,y_i>0$ with $x_iy_i>2t$, using
\eqref{InuOgrande} we have
\begin{align}
|\partial_{x_i} \mathcal{W}_t^{\alpha_i}(x_i,y_i)|&=\left|\frac{e^{-\frac{x_i^2+y_i^2}{4t}}}{(2t)^{\alpha_i+1/2}}\left(\frac{x_iy_i}{2t}\right)^{-\alpha_i}\frac{e^{\frac{x_iy_i}{2t}}}{\sqrt{2\pi}}\right.\nonumber\\
&\quad \left.\times\left[-\frac{x_i}{2t}\left( 1+O\left(\frac{2t}{x_iy_i}\right)\right)+\frac{y_i}{2t}\left( 1+O\left(\frac{2t}{x_iy_i}\right)\right)\right]\right|\nonumber\\
&\leq
C\frac{e^{-\frac{(x_i-y_i)^2}{4t}}}{t^{\alpha_i+1/2}}\left(\frac{x_iy_i}{t}\right)^{-\alpha_i}\frac{x_i+y_i}{t}\nonumber
\leq
C\frac{x_i+y_i}{t^{1/2}}\frac{e^{-\frac{(x_i-y_i)^2}{4t}}}{t^{\alpha_i+1}}.\nonumber
\end{align}

Therefore, combining the above estimate and \eqref{heat_i}, we deduce
\begin{equation}\label{derivheatkernel}
|\partial_{x_i} \mathcal{W}_t^{\alpha}(x,y)|\leq C_\alpha
\frac{x_i+y_i}{t^{1/2}}\frac{e^{-\frac{|x-y|^2}{8t}}}{t^{|\alpha|+\frac{d+1}{2}}}.
\end{equation}
From  \eqref{derivheatkernel}, we obtain, for every $x,y\in
\mathbb{R}_+^d$ and $t>0$,
\begin{align*}
|\partial_{x_i} \mathcal{P}_t^\alpha (x,y)|&\le
\frac{t}{\sqrt{2\pi}}\int_0^\infty
\frac{e^{-\frac{t^2}{4u}}}{u^{3/2}}|\partial_{x_i}\mathcal{W}_u^\alpha
(x,y)|du\nonumber\\
&\leq C t
(x_i+y_i)\int_0^{\infty}\frac{e^{-\frac{t^2+|x-y|^2}{8u}}}{u^{|\alpha|+\frac{d+1}{2}}}\frac{du}{u^2}\nonumber
\\ & \leq C
t\frac{x_i+y_i}{(t^2+|x-y|^2)^{|\alpha|+\frac{d+3}{2}}}.\qedhere
\end{align*}
\end{proof}

\begin{prop}Let {$\alpha\in \mathbb R^d_{\geq 0}$} and $g\in C^\infty_c(\mathbb R^d_+)$. Then, for every $i=1,\dots, d$,
\begin{enumerate}[label=\textnormal{($\mathbb{P}$.\arabic*)}]
\item \label{PP-t0} $\lim_{t\to +\infty} \mathbb{P}_t^{\alpha,i} { g}(x)=0$, for every  $x\in \mathbb R_+^d$,
 \item \label{tdPP-t0}$\lim_{t\to 0^+}t\partial_t \mathbb{P}_t^{\alpha,i}g(x)=0$, for every  $x\in\mathbb R_+^d.$
 \item \label{tdPP-tinfty}$\lim_{t\to +\infty}t\partial_t \mathbb{P}_t^{\alpha,i}g(x)=0$, for every  $x\in \mathbb R_+^d.$
 \end{enumerate}
\end{prop}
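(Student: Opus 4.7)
The plan is to reduce each of ($\mathbb{P}$.1), ($\mathbb{P}$.2), ($\mathbb{P}$.3) to the corresponding statement (P.1), (P.2), (P.3) of Proposition \ref{estimates}, using the identity \eqref{Z1}, namely
\[
\mathbb{P}_t^{\alpha,i}g(x)=x_i\,P_t^{\alpha+e_i}\!\left(\frac{g}{\cdot_i}\right)\!(x).
\]
Indeed, the factor $x_i$ is a positive constant for each fixed $x\in\mathbb{R}^d_+$, so it commutes with both the limit in $t$ and the differentiation $\partial_t$. In particular
\[
t\partial_t\mathbb{P}_t^{\alpha,i}g(x)=x_i\,t\partial_t P_t^{\alpha+e_i}\!\left(\frac{g}{\cdot_i}\right)\!(x).
\]

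The only preliminary point to check is that the auxiliary function $\widetilde{g}(y):=g(y)/y_i$ belongs to $C_c^\infty(\mathbb{R}^d_+)$, so that Proposition \ref{estimates} may be applied to it with the multi-index $\alpha+e_i$ in place of $\alpha$. This is immediate: since $g\in C_c^\infty(\mathbb{R}^d_+)$, its support is a compact subset of the open orthant, hence bounded away from the coordinate hyperplane $\{y_i=0\}$; therefore $y_i^{-1}$ is smooth on a neighbourhood of $\supp(g)$ and $\widetilde{g}\in C_c^\infty(\mathbb{R}^d_+)$.

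With this observation in hand, property \ref{PP-t0} follows by applying \ref{P-tinfty} to $\widetilde{g}$ with parameter $\alpha+e_i$, obtaining $\lim_{t\to+\infty}P_t^{\alpha+e_i}\widetilde g(x)=0$, and multiplying by $x_i$. The properties \ref{tdPP-t0} and \ref{tdPP-tinfty} follow in the same way from \ref{tdP-t0} and \ref{tdP-tinfty}, applied to $\widetilde g$ with parameter $\alpha+e_i$, since $t\partial_t$ passes through the $x_i$ factor.

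There is essentially no obstacle to this argument: the proposition is a direct corollary of \eqref{Z1} together with Proposition \ref{estimates}. The only small care needed is in handling the factor $1/y_i$, which is harmless precisely because $g$ has compact support inside the open half-space $\mathbb{R}^d_+$.
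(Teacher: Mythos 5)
Your proposal is correct and matches the paper's intent: the paper's proof is the one-line remark that one ``can proceed as in the proof of the corresponding items of Proposition \ref{estimates}'', which rests on exactly the structural fact you invoke, namely that $\mathbb{P}_t^{\alpha,i}g=x_iP_t^{\alpha+e_i}(g/\cdot_i)$ with $g/\cdot_i\in C_c^\infty(\mathbb{R}^d_+)$. Your packaging of this as a direct reduction to the already-proved Proposition \ref{estimates} (with parameter $\alpha+e_i$ and the fixed factor $x_i$ passing through the limits and $t\partial_t$) is, if anything, a cleaner way to write the same argument, and the one point needing care --- that $y_i^{-1}$ is harmless on $\supp(g)$ --- is correctly addressed.
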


\begin{proof} We can proceed as in the proof of the corresponding items of Proposition \ref{estimates}.
\end{proof}

\section{Proofs of Theorems \ref{Riesz} and \ref{Rieszcompositions}}

In this section we present the proofs of Theorems
\ref{Riesz} and \ref{Rieszcompositions}. First, we shall prove some
auxiliary results.

\begin{lem}\label{Ralfaifg} Let {$\alpha\in \mathbb R^d_{\geq 0}$} and $f,g \in L^2(\mathbb R^d_+, x^{2\alpha}dx)$. Then,
\begin{equation}\label{Lemma}
\int_{\mathbb R_+^d} R_{\alpha,i} f(x) g(x)x^{2\alpha}dx=-4\int_0^\infty \int_{\mathbb{R}^d_+} \partial_{x_i} P^\alpha_t (f)(x)\partial_t \mathbb {P}^{\alpha,i}_t (g)(x)x^{2\alpha}dx t dt.
\end{equation}
for every $i=1,\dots,d.$
\end{lem}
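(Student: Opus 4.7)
My plan is to evaluate both sides of \eqref{Lemma} through the Hankel transform, exploiting the fact that each operator appearing carries an outer factor of $x_i$, and to conclude by Parseval on $L^2(\mathbb{R}_+^d,y^{2(\alpha+e_i)}dy)$. The structural facts I will repeatedly use are the derivative formula $\partial_{x_i}\phi_y^\alpha(x)=-x_iy_i^2\phi_y^{\alpha+e_i}(x)$ and the measure identity $x_i^2\,x^{2\alpha}=x^{2(\alpha+e_i)}$. Both sides of \eqref{Lemma} define continuous bilinear forms on $L^2(\mathbb{R}_+^d,x^{2\alpha}dx)\times L^2(\mathbb{R}_+^d,x^{2\alpha}dx)$---the left-hand side by $L^2$-boundedness of $R_{\alpha,i}$, the right-hand side by Cauchy--Schwarz in $(x,t)$ together with standard $L^2$ Littlewood--Paley bounds for $\partial_{x_i}P_t^\alpha f$ and $\partial_t\mathbb{P}_t^{\alpha,i}g$ (the latter reducing to a Poisson $g$-function for $P_t^{\alpha+e_i}$ via \eqref{Z1})---so it suffices to prove \eqref{Lemma} for $f,g\in\mathcal{A}_{\alpha,c}(\mathbb{R}_+^d)$, where every Hankel image is compactly supported in $(0,\infty)^d$.

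For the left-hand side, writing $R_{\alpha,i}f(x)=-x_ih_{\alpha+e_i}(|y|^{-1}h_\alpha f(y))(x)$, absorbing $x_i$ into the measure, and applying Parseval for $h_{\alpha+e_i}$ yields
\[
\int_{\mathbb{R}_+^d}R_{\alpha,i}f(x)g(x)x^{2\alpha}dx=-\int_{\mathbb{R}_+^d}\frac{h_\alpha f(y)}{|y|}\,h_{\alpha+e_i}\!\left(\frac{g}{\cdot_i}\right)(y)\,y^{2(\alpha+e_i)}dy.
\]
For the inner integrand on the right-hand side of \eqref{Lemma}, the same derivative formula gives $\partial_{x_i}P_t^\alpha f(x)=-x_ih_{\alpha+e_i}(e^{-|y|t}h_\alpha f)(x)$, and differentiating \eqref{Z1} gives $\partial_t\mathbb{P}_t^{\alpha,i}g(x)=-x_ih_{\alpha+e_i}(|y|e^{-|y|t}h_{\alpha+e_i}(g/\cdot_i))(x)$. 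Multiplying, absorbing $x_i^2$ into the measure, and applying Parseval produces
\[
\int_{\mathbb{R}_+^d}\partial_{x_i}P_t^\alpha f(x)\,\partial_t\mathbb{P}_t^{\alpha,i}g(x)\,x^{2\alpha}dx=\int_{\mathbb{R}_+^d}|y|e^{-2|y|t}\,h_\alpha f(y)\,h_{\alpha+e_i}\!\left(\frac{g}{\cdot_i}\right)(y)\,y^{2(\alpha+e_i)}dy.
\]
Multiplying by $-4t$, integrating over $t\in(0,\infty)$, and using the elementary identity $\int_0^\infty 4te^{-2|y|t}dt=|y|^{-2}$ yields exactly the expression already obtained for the left-hand side.

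The one genuinely delicate point is the Fubini exchange of the $t$- and $y$-integrations in the last step. Because $f,g\in\mathcal{A}_{\alpha,c}$, $h_\alpha f$ is compactly supported in $(0,\infty)^d$, so $|y|\geq c>0$ on its support and $\int_0^\infty 4te^{-2|y|t}dt$ is uniformly bounded there; combined with Cauchy--Schwarz in $y$ and the Parseval identity $\|h_{\alpha+e_i}(g/\cdot_i)\|_{L^2(y^{2(\alpha+e_i)}dy)}=\|g\|_{L^2(y^{2\alpha}dy)}$, this gives absolute convergence and legitimizes the interchange. The identity on $\mathcal{A}_{\alpha,c}(\mathbb{R}_+^d)\times\mathcal{A}_{\alpha,c}(\mathbb{R}_+^d)$ then extends to $L^2(\mathbb{R}_+^d,x^{2\alpha}dx)\times L^2(\mathbb{R}_+^d,x^{2\alpha}dx)$ by bilinear continuity of both sides.
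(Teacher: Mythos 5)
Your proof is correct and follows essentially the same route as the paper: both sides are computed via the Hankel-transform representations of $\partial_{x_i}P_t^\alpha f$ and $\partial_t\mathbb{P}_t^{\alpha,i}g$, Parseval for $h_{\alpha+e_i}$ after absorbing the outer $x_i$ factors into the measure, and the identity $\int_0^\infty te^{-2|y|t}\,dt=\tfrac{1}{4|y|^2}$. The only (harmless) difference is in the Fubini justification: the paper handles general $f,g\in L^2$ directly by Cauchy--Schwarz using $y_i/|y|\le 1$, whereas you first reduce to the dense class $\mathcal{A}_{\alpha,c}$ and extend by bilinear continuity.
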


\begin{proof}
Let $i=1,\ldots,d$. According to \cite[(5.3.7), p. 103]{Leb} we have
that
\begin{equation}\label{F1}
\partial_{x_i}P_t^\alpha(f)(x)=-x_ih_{\alpha+e_i}\left(e^{-|y|t}h_\alpha(f)(y)\right)(x),\,\,\,x\in \mathbb{R}_+^d\,\,\,and\,\,\,t>0.
\end{equation}
Also, we get
\begin{equation}\label{F2}
\partial_{t}\mathbb{P}_t^{\alpha,i}(g)(x)=-x_ih_{\alpha+e_i}\left(|y|e^{-|y|t}h_{\alpha+e_i}\left(\frac{g}{._i}\right)(y)\right)(x),\,\,\,x\in \mathbb{R}_+^d\,\,\,and\,\,\,t>0.
\end{equation}
Note that differentiations under the integral sign to obtain
\eqref{F1} and \eqref{F2} are justified because the function
$z^{-\mu}J_\mu(z)$ is bounded on $(0,\infty)$ for every $\mu\ge-1/2$
(\cite[(5.16.1), p. 134]{Leb}).

From \eqref{F1} and \eqref{F2} we can write
\begin{align*}
\int_{0}^\infty &\int_{\mathbb{R}^d_+}\partial_{x_i} P^\alpha_t
(f)(x)\partial_t \mathbb {P}^{\alpha,i}_t (g)(x)x^{2\alpha}dx t dt
\\&= \int_0^\infty \int_{\mathbb R_+^d} x_i\,
h_{\alpha+e_i}(e^{-|\cdot| t}h_\alpha f)(x)
x_i\, h_{\alpha+e_i}\left(|\cdot| e^{-|\cdot| t}h_{\alpha+e_i}\left(\frac{g}{\cdot_i}\right) \right)(x)\, x^{2\alpha}dx tdt\\
&=\int_0^\infty \int_{\mathbb R_+^d}  h_{\alpha+e_i}(e^{-|\cdot| t}h_\alpha f)(x)
h_{\alpha+e_i}\left(|\cdot| e^{-|\cdot| t}h_{\alpha+e_i}\left(\frac{g}{\cdot_i}\right) \right)(x)x^{2(\alpha+e_i)}dxt dt.
\end{align*}
Since the Hankel transform $h_{\alpha+e_i}$ is self-adjoint on
$L^2(\mathbb R^d_+,y^{2(\alpha+e_i)}dy)$ and coincides with its
inverse, we obtain
\begin{align*}\int_{0}^\infty \int_{\mathbb{R}^d_+} &\partial_{x_i} P^\alpha_t (f)(x)\partial_t \mathbb {P}^{\alpha,i}_t (g)(x)x^{2\alpha}dx t dt \\&=\int_0^\infty \int_{\mathbb R_+^d}|y|  e^{- 2|y|t}h_\alpha (f)(y)h_{\alpha+e_i}\left(\frac{g(\cdot)}{\cdot_i}\right)(y) y^{2(\alpha+e_i)}dy\;t\, dt.
\end{align*}
Notice that we can interchange the order of integration above since $h_\alpha$ and $h_{\alpha+e_i}$ are isometries in $L^2(\mathbb R^d_+,y^{2\alpha}dy)$ and $L^2(\mathbb R^d_+,y^{2(\alpha+e_i)}dy)$, respectively. Indeed, we have that
\begin{align*}
\int_0^\infty &\int_{\mathbb R_+^d} t|y| e^{-2|y| t}|h_\alpha (f)(y)|\left|h_{\alpha+e_i}\left(\frac{g(\cdot)}{\cdot_i}\right)(y)\right| y^{2(\alpha+e_i)}dy\, dt\\
&\leq \frac{1}{4} \int_{\mathbb R^d_+} \frac{1}{|y|}|h_\alpha (f)(y)|\left|h_{\alpha+e_i}\left(\frac{g(\cdot)}{\cdot_i}\right)(y)\right| y^{2(\alpha+e_i)}dy\\
&\leq \frac{1}{4}\left(\int_{\mathbb R^d_+} \left(\frac{y_i}{|y|}|h_\alpha (f)(y)|\right)^2 y^{2\alpha}dy\right)^{1/2}\left(\int_{\mathbb R^d_+} \left|y_ih_{\alpha+e_i}\left(\frac{g(\cdot)}{\cdot_i}\right)(y)\right|^2 y^{2\alpha}dy\right)^{1/2}\\
&\leq \frac{1}{4} \|h_\alpha (f)\|_{L^2(\mathbb R^d_+, y^{2\alpha}dy)} \left\|h_{\alpha+e_i}\left(\frac{g(\cdot)}{\cdot_i}\right)\right\|_{L^2(\mathbb R^d_+, y^{2(\alpha+e_i)}dy)} \\
&\leq \frac{1}{4} \| f\|_{L^2(\mathbb R^d_+, y^{2\alpha}dy)} \|g\|_{L^2(\mathbb R^d_+, y^{2\alpha}dy)} <\infty.
\end{align*}

Thus, we get
\begin{align*}
\int_{0}^\infty \int_{\mathbb{R}^d_+} &\partial_{x_i} P^\alpha_t (f)(x)\partial_t \mathbb {P}^{\alpha,i}_t (g)(x)x^{2\alpha}dx t dt\\
&= \int_{\mathbb R_+^d} \left(\int_0^\infty |y|^2e^{-2|y|t}t dt\right)\frac{ h_\alpha f(y)}{|y|}h_{\alpha+e_i}\left(\frac{g}{\cdot_i}\right)(y) y^{2(\alpha+e_i)}dy \\
&= \frac{1}{4}\int_{\mathbb R_+^d} h_{\alpha+e_i}\left( \frac{ h_\alpha f}{|\cdot|}\right)(y) \frac{g(y)}{y_i} y^{2(\alpha+e_i)}dy \\
& =\frac{1}{4}\int_{\mathbb R_+^d} y_i h_{\alpha+e_i}\left( \frac{ h_\alpha f}{|\cdot|}\right)(y) g(y) y^{2\alpha}dy \\
&= -\frac{1}{4}\int_{\mathbb R_+^d} R_{\alpha,i} f(y) \, g(y) \,
y^{2\alpha}dy.\qedhere
 \end{align*}
\end{proof}

\begin{rem} Let {$\alpha\in \mathbb R^d_{\geq 0}$}. We define Littlewood-Paley functions associated with Bessel-Poisson semigroups as
follows: for every $f\in L^p(\mathbb R^d_+, x^{2\alpha}dx)$,
$1<p<\infty$ {and $i=1,\ldots,d$,}
\[
G_{\alpha,0}^{i}(f)(x)=\left(\int_0^\infty
t|\partial_t\mathbb{P}_t^{\alpha,{i}}(f)(x)|^2dt\right)^{1/2},\,\,\,x\in
\mathbb{R}_+^d,
\]
{and}
\[
G_{\alpha,i}(f)(x)=\left(\int_0^\infty
t|\partial_{x_i}P_t^\alpha(f)(x)|^2dt\right)^{1/2},\,\,\,x\in
\mathbb{R}_+^d.
\]
The heat semigroup $\{T_t^\alpha\}_{t>0}$ is a diffusion semigroup
in the Stein's sense (\cite[p. 65]{StLP}). Then,
$\{P_t^\alpha\}_{t>0}$ and
$\{\mathbb{P}_t^{\alpha,i}\_{t>0}$ are also Stein
diffusion semigroups. According to \cite[Theorem 10]{StLP} (see also
\cite{Meda}), for every $1<p<\infty$ {and $i=1,\ldots,d$,}, there
exists $C_p>0$ depending only on $p$, such that
\[
\|G_{\alpha,0}^{i}(f)\|_{p}\le C_p\|f\|_p, \,\,\,f\in L^p(\mathbb
R^d_+, x^{2\alpha}dx).
\]
By proceeding as in the proof of \cite[Theorem 6]{No} we can see
that for every $1<p<\infty$ and $i=1,\ldots,d$, there exists $C_p>0$
depending only on $p$, such that
\[
\|G_{\alpha,i}(f)\|_{p}\le C_p\|f\|_p, \,\,\,f\in L^p(\mathbb R^d_+,
x^{2\alpha}dx).
\]
By using Lemma \ref{Ralfaifg} and H\"older's inequality we can prove
that, for every $1<p<\infty$ and $i=1,\ldots,d$, there exists
$C_p>0$ depending only on $p$, such that
\[
\|R_{\alpha,i} f\|\le C_p\|f\|_p,\,\,\,f\in L^p(\mathbb R^d_+,
x^{2\alpha}dx).
\]
Our next argument, that uses Bellman functions, allows us to improve
the last boundedness inequality getting an explicit expression for
the constant $C_p$.
\end{rem}

\vspace{3mm}

A fundamental ingredient for the proof of our main result is the Bellman function. A function introduced by Nazarov and Treil (\cite{NT}) and some modifications of it have been used in different settings (\cite{CD}, \cite{DV1}, \cite{DV2}, \cite{DV3}, \cite{W1}, and \cite{W2}).

Assume that $p \geq 2$. Note that $p^*=p$. We denote as usual
$p'=p/(p-1)$ and we define $\gamma(p)= \frac{p'(p'-1)}{8}$. As in
\cite{W2} we consider the function $\beta_p$ defined by
\[
\beta_p(s_1,s_2)= s_1^p+s_2^p + \gamma\left\{\begin{array}{lr} s_1^2s_2^{2-p'}, & s_1^p \leq s_2^q,\\
\frac{2}{p}s_1^p + \left(\frac{2}{p'}-1 \right)s_2^{p'},& s_1^p \geq
s_2^q \end{array}\right.,\quad  s_1,s_2 \geq 0.
\]
For $m=(m_1,m_2)\in \mathbb N^2$, the Nazarov-Treil Bellman function $B_p=B_{p,m}:\mathbb{R}^{m_1}\times\mathbb R^{m_2} \to [0,\infty)$ is defined by
\[B_p(\zeta, \eta)=\frac{1}{2}\beta_p(|\zeta|,|\eta|),\]
with $\zeta\in \mathbb R^{m_1}$ and $\eta\in\mathbb R^{m_2}.$ Let us
remark that $B_p\in C^1(\mathbb{R}^{m_1+m_2})$ and $B_p\in
C^2(\mathbb{R}^{m_1+m_2}\setminus \{(\zeta, \eta): \eta=0\
\text{or}\ |\zeta|^p=|\eta|^q\}).$ In order to get rid of this lack
of smoothness we convolve this function with a smooth
non-negative radial approximation to the identity
$\{\psi_\kappa\}$ where $\kappa>0.$ We call it
\[B_{\kappa,p}=B_p\ast \psi_\kappa.\]
Since both $B_p$ and $\psi_\kappa$ are bi-radial then $B_{\kappa,p}$
is bi-radial and therefore there exists
$\beta_{\kappa,p}:[0,\infty)^2\to [0,\infty) $ such that
$B_{\kappa,p} (\zeta, \eta)=\frac{1}{2}\beta_{\kappa,p} (|\zeta|,
|\eta|).$ The properties of the functions $\beta_{\kappa,p}$ and
$B_{\kappa,p}$ that we need are stated in \cite[Proposition 4]{W2}.
We list them below for the sake of completeness.

\begin{prop}[\cite{W2}]\label{bellman}
Let $\kappa \in (0,1).$ Then for $r,s>0,$ we have
\begin{enumerate}[label=\textnormal{($B$.\arabic*)}]
\item \label{cotaBk} $0\le \beta_{\kappa,p} (r,s)\le (1+\gamma(p))((r+\kappa)^p+(s+\kappa)^{p'}),$
\item \label{partialBk}$0\le \partial_r \beta_{\kappa,p} (r,s)\le C_p\, \max\{(r+\kappa)^{p-1}, s+\kappa\}$ and $0\le \partial_s \beta_{\kappa,p} (r,s)\le C_p(s+\kappa)^{p'-1},$ with $C_p$ being a positive constant.
\end{enumerate}
Moreover $B_{\kappa,p} \in C^\infty (\mathbb{R}^{m_1+m_2}),$ and for any $(\zeta,\eta)\in \mathbb{R}^{m_1+m_2}$ there exists a positive $\tau_\kappa=\tau_\kappa(|\zeta|, |\eta|)$ such that for $\omega=(\omega_1, \omega_2)\in \mathbb{R}^{m_1+m_2}$ we have
\begin{enumerate}[label=\textnormal{($B$.\arabic*)}]\addtocounter{enumi}{2}
\item \label{hessBk} $\omega\text{Hess}(B_{\kappa,p})(\zeta,\eta)\omega^T \ge \frac{\gamma(p)}{2}(\tau_\kappa |\omega_1|^2+\tau_\kappa^{-1}|\omega_2|^2),$ where $\text{Hess}(B_{\kappa,p})$ denotes the Hessian matrix of $B_{\kappa,p}$.
\end{enumerate}
\end{prop}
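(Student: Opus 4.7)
The plan is to work directly from the explicit piecewise formula for $\beta_p$, verify the analogues of $(B.1)$--$(B.3)$ for the raw (non-smooth) function on the interior of each region of definition, and then transfer everything to $B_{\kappa,p}=B_p\ast\psi_\kappa$ by averaging. The two open regions are $\Omega_1=\{|\zeta|^p<|\eta|^{p'}\}$ and $\Omega_2=\{|\zeta|^p>|\eta|^{p'}\}$, and off the set $\{\eta=0\}\cup\{|\zeta|^p=|\eta|^{p'}\}$ the function $B_p$ is $C^2$ with an explicit gradient and Hessian. A direct algebraic computation on each $\Omega_j$ yields $0\le\beta_p(s_1,s_2)\le(1+\gamma(p))(s_1^p+s_2^{p'})$ together with $|\partial_{s_1}\beta_p|\lesssim\max\{s_1^{p-1},s_2\}$, $|\partial_{s_2}\beta_p|\lesssim s_2^{p'-1}$. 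The non-trivial part is the Hessian estimate $\omega\,\mathrm{Hess}(B_p)(\zeta,\eta)\omega^T\ge\tfrac{\gamma(p)}{2}(\tau|\omega_1|^2+\tau^{-1}|\omega_2|^2)$ for some $\tau=\tau(|\zeta|,|\eta|)>0$; here one would separate the contributions of $s_1^p+s_2^{p'}$ (which are positive semidefinite and handled by Young's inequality, contributing $\tau$'s balanced against $\tau^{-1}$'s) and the $\gamma$--correction, showing by case analysis on $\Omega_1$ and $\Omega_2$ that the correction cancels the indefinite cross-terms produced by the main piece. The natural choice is $\tau=\tau(|\zeta|,|\eta|)$ proportional to $|\eta|^{p'-2}$ with an appropriate modification when $|\zeta|^p>|\eta|^{p'}$.

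Next, I would verify that $B_p$ is $C^1$ across the gluing curve $\{|\zeta|^p=|\eta|^{p'}\}$, which is a matching-of-derivatives computation built into the definition of the coefficients $\tfrac{2}{p}$ and $\tfrac{2}{p'}-1$. Once $C^1$-gluing is confirmed, the Hessian of $B_p$ exists almost everywhere and the preceding pointwise lower bound holds a.e.

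To pass to $B_{\kappa,p}$, I would use that $\psi_\kappa$ is a non-negative radial approximation of the identity supported in $B(0,\kappa)$. For $(B.1)$, $B_{\kappa,p}(\zeta,\eta)$ is an average of $B_p$ at points within $\kappa$ of $(\zeta,\eta)$, so the pointwise size bound lifts with the $(r+\kappa)^p+(s+\kappa)^{p'}$ on the right. For $(B.2)$, one differentiates under the integral sign, moves a derivative onto $\psi_\kappa$ if needed, and applies monotonicity in the radial variables; the bi-radiality of $\psi_\kappa$ ensures $B_{\kappa,p}$ is bi-radial, which yields the existence of $\beta_{\kappa,p}$ with the claimed derivative bounds. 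For $(B.3)$, the Hessian of $B_{\kappa,p}$ is the convolution of the Hessian of $B_p$ (defined a.e.) against $\psi_\kappa$; since the a.e.\ pointwise inequality $\omega\,\mathrm{Hess}(B_p)(\zeta',\eta')\omega^T\ge\tfrac{\gamma(p)}{2}(\tau(\zeta',\eta')|\omega_1|^2+\tau(\zeta',\eta')^{-1}|\omega_2|^2)$ holds uniformly in $\omega$, averaging against $\psi_\kappa$ gives the claim with $\tau_\kappa(|\zeta|,|\eta|)$ built from the average of $\tau$ and an application of the AM--HM inequality to combine the two terms with a single $\tau_\kappa$.

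The main obstacle will be the Hessian inequality for $B_p$ itself on $\Omega_1$, where the mixed term $\partial_{s_1}\partial_{s_2}\beta_p$ coming from $\gamma s_1^2 s_2^{2-p'}$ must be controlled by the diagonal terms via a careful Young-type split, and choosing the right weight $\tau$ to make the estimate sharp enough to yield the factor $\gamma(p)/2$. A secondary subtlety is ensuring that $\tau_\kappa$ can be chosen independent of $\omega$, which ultimately follows from the bi-radial structure of both $B_p$ and $\psi_\kappa$. Since all of this is carried out in detail in \cite[Proposition~4]{W2}, the proof in the present paper can legitimately just cite that reference, but the outline above is what would need to be reproduced for a self-contained argument.
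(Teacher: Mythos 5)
The paper gives no proof of this proposition at all: it is quoted verbatim from Wr\'obel \cite[Proposition 4]{W2}, and the authors simply cite that reference. Your outline is a faithful sketch of the argument carried out there (regionwise verification for the raw Nazarov--Treil function, $C^1$ matching across $\{|\zeta|^p=|\eta|^{p'}\}$, and transfer of the size, gradient and Hessian bounds to $B_{\kappa,p}$ by mollification with the AM--HM step to get a single $\tau_\kappa$), and your closing remark that a citation suffices is exactly what the paper does.
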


Suppose now that { $f,g_i\in C_c^\infty(\mathbb R^d_+)$ and $g_i\ge
0, i=1, \dots, d$. Let $g=(g_1,\dots, g_d).$} We define the function
{$b_{\kappa, p}(x,t)$ as follows
 \[
 b_{\kappa,p} (x,t)= B_{\kappa,p}(u(x,t)),\;\; x\in \mathbb R^d_+\;\; \mathrm{and}\;\; t>0,
 \]
 where $u(x,t):=(P_t^\alpha(f)(x), \mathbb P_t^{\alpha}(g)(x))$ and $\mathbb P_t^\alpha g(x):=(\mathbb P_t^{\alpha,1}(g_1)(x), \dots  \mathbb P_t^{\alpha,d}(g_d)(x))$. Note that in this case $m_1=1$ and $m_2=d.$}

The following lemma allows us to obtain a crucial connection between
the function {$b_{\kappa, p}$} and the right hand side of
\eqref{Lemma}.

\begin{lem}Let {$\alpha \in \mathbb R^d_{\geq 0}$}, and $\kappa\in (0,1)$.
 Assume that $f,g_i \in C_c^\infty(\mathbb R^d_+)$,
$g_i \geq 0$, and $i=1,\ldots,d$. Then, we have the
following pointwise equality {\begin{align*}
\mathcal L_\alpha b_{\kappa, p}(x,t)&=(\partial_t u)\text{Hess}(B_{\kappa,p}(u(x,t)))(\partial_t u)^T\\
&\quad +\sum_{j=1}^d(\partial_{x_j} u)\text{Hess}(B_{\kappa,p}(u(x,t)))(\partial_{x_j} u)^T \\
&\quad +\sum_{j=1}^d\partial_{\eta_j}
B_{\kappa,p}(u(x,t))\frac{2\alpha_j}{x_j^2}\mathbb{P}_t^{\alpha,j}({g_j})(x),
\end{align*}}
for every  $x\in \mathbb R^d_+$ and $t>0$.

Moreover,
\begin{equation*}
\mathcal L_\alpha b_{\kappa, p}(x,t)\geq \gamma(p) |
P_t^\alpha(f)(x)|_*{| \mathbb{P}_t^{\alpha}(g)(x)|_*}\,\,\,,x\in
\mathbb{R}_+^d\,\,\,and\,\,\,t>0.
\end{equation*}
Here (see \cite[(3.4)]{W2}) we define
\[
| P_t^\alpha(f)(x)|_*:=
\left(|\partial_tP_t^\alpha(f)(x)|^2+\sum_{j=1}^d|\partial_{x_j}P_t^\alpha(f)(x)|^2\right)^{1/2},\,\,\,x\in
\mathbb{R}_+^d\,\,\,and\,\,\,t>0,
\]
and $|
\mathbb{P}_t^{\alpha}(g)(x)|_*:=\left(\sum_{i=1}^d|\mathbb{P}_t^{\alpha,i}(g_i)(x)|_*^2\right)^{1/2}$, $x\in \mathbb{R}^d_+$ and $t>0$.
\end{lem}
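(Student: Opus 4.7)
The plan is to compute $\mathcal{L}_\alpha b_{\kappa,p}$ by direct differentiation, using the smoothness of $B_{\kappa,p}$ guaranteed by Proposition~\ref{bellman}, and then invoke the PDEs satisfied by the components of $u$. Writing $\nabla B_{\kappa,p}$ and $\text{Hess}(B_{\kappa,p})$ evaluated at $u(x,t)$, the chain rule in $t$ gives $\partial_t^2 b_{\kappa,p} = (\partial_t u)\,\text{Hess}(B_{\kappa,p})\,(\partial_t u)^T + \nabla B_{\kappa,p}\cdot \partial_t^2 u$. Using the decomposition $B_\alpha = -\sum_j \partial_{x_j}^2 + \sum_j \frac{2\alpha_j}{x_j}\partial_{x_j}$, the same chain rule applied component-wise in each spatial variable yields
\[ B_\alpha b_{\kappa,p}(x,t) = -\sum_{j=1}^d (\partial_{x_j} u)\,\text{Hess}(B_{\kappa,p})\,(\partial_{x_j} u)^T + \nabla B_{\kappa,p}\cdot B_\alpha u, \]
where $B_\alpha u$ is understood coordinate-wise on the $(1+d)$-valued map $u$. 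Subtracting and collecting the Hessian quadratic forms gives
\[ \mathcal{L}_\alpha b_{\kappa,p} = (\partial_t u)\,\text{Hess}(B_{\kappa,p})\,(\partial_t u)^T + \sum_{j=1}^d (\partial_{x_j} u)\,\text{Hess}(B_{\kappa,p})\,(\partial_{x_j} u)^T + \nabla B_{\kappa,p}\cdot \mathcal{L}_\alpha u. \]

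Next I would evaluate $\mathcal{L}_\alpha u$ coordinate by coordinate. The first coordinate is $P_t^\alpha f$, for which $\mathcal{L}_\alpha P_t^\alpha f = 0$ by the identity in Section~2; hence the $\zeta$-gradient piece of the last term vanishes. For $j=1,\ldots,d$, the $(j+1)$-st coordinate of $u$ is $\mathbb{P}_t^{\alpha,j}(g_j)$, and the relation $\mathcal{L}_\alpha \mathbb{P}_t^{\alpha,j} g = \frac{2\alpha_j}{x_j^2}\mathbb{P}_t^{\alpha,j} g$ recorded earlier gives that $\nabla B_{\kappa,p}(u)\cdot \mathcal{L}_\alpha u$ reduces to exactly $\sum_{j=1}^d \partial_{\eta_j} B_{\kappa,p}(u)\,\frac{2\alpha_j}{x_j^2}\mathbb{P}_t^{\alpha,j}(g_j)$. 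This proves the claimed pointwise equality.

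For the lower bound, I would apply the Hessian estimate \ref{hessBk} twice: once with $\omega = \partial_t u$, whose components are $\omega_1 = \partial_t P_t^\alpha f\in \mathbb{R}$ and $\omega_2 = \partial_t \mathbb{P}_t^{\alpha}(g)\in \mathbb{R}^d$; and once with $\omega = \partial_{x_j} u$ for each $j$, summing over $j$. Rearranging and recognising the definitions of $|P_t^\alpha f|_*$ and $|\mathbb{P}_t^\alpha g|_*$, the two Hessian quadratic forms are bounded below by
\[ \frac{\gamma(p)}{2}\bigl(\tau_\kappa\,|P_t^\alpha f|_*^2 + \tau_\kappa^{-1}\,|\mathbb{P}_t^\alpha g|_*^2\bigr) \geq \gamma(p)\,|P_t^\alpha f|_*\,|\mathbb{P}_t^\alpha g|_*, \]
the last step by AM--GM. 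Finally, the Bessel correction $\sum_{j} \partial_{\eta_j} B_{\kappa,p}(u)\,\frac{2\alpha_j}{x_j^2}\mathbb{P}_t^{\alpha,j}(g_j)$ is non-negative, because $\partial_{\eta_j} B_{\kappa,p}\geq 0$ by \ref{partialBk}, $\alpha_j\geq 0$, and $\mathbb{P}_t^{\alpha,j}(g_j)\geq 0$ by \eqref{Z1} together with the positivity of $P_t^{\alpha+e_j}$ applied to $g_j\geq 0$. Adding this non-negative term preserves the inequality, yielding the second statement.

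No step is deep; the main obstacle is purely organisational---tracking the roles of the scalar $\zeta$-component and the vector $\eta$-component of $u$ in the chain-rule expansion, so that the Hessian form decomposes cleanly into $|\cdot|_*^2$ pieces and the residual Bessel term is recognised as manifestly non-negative.
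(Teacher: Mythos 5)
Your proposal is correct and follows essentially the same route as the paper: a chain-rule computation for $\mathcal{L}_\alpha b_{\kappa,p}$ (which the paper delegates to \cite[Proposition 5]{W2}) combined with $\mathcal{L}_\alpha P_t^\alpha f=0$ and $\mathcal{L}_\alpha\mathbb{P}_t^{\alpha,j}g_j=\frac{2\alpha_j}{x_j^2}\mathbb{P}_t^{\alpha,j}g_j$, then \ref{hessBk} applied to $\partial_t u$ and each $\partial_{x_j}u$ with the same $\tau_\kappa=\tau_\kappa(u(x,t))$, non-negativity of the residual term (which ultimately rests on $\eta_j=\mathbb{P}_t^{\alpha,j}(g_j)\ge 0$, since $\partial_{\eta_j}B_{\kappa,p}$ carries a factor $\eta_j/|\eta|$), and AM--GM. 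You actually write out the chain-rule step the paper only cites, so nothing is missing.
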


\begin{proof}
The first part of this lemma can be proved following the same lines
as in \cite[Proposition 5]{W2}. We use that $\mathcal{L}_\alpha
P_t^\alpha(f)(x)=0$ and, for every $i=1,\ldots,d$,
{\begin{equation}
\label{wrobel}\mathcal{L}_\alpha\mathbb{P}_t^{\alpha,i}(g_i)(x)=\frac{2\alpha_i}{x_i^2}\mathbb{P}_t^{\alpha,i}(g_i)(x),
\end{equation}}
$x\in \mathbb{R}_+^d$ and $t>0$.

{By applying now the Bellman function properties \ref{hessBk}  on
each term involving the Hessian, \ref{partialBk} and taking into
account that \eqref{wrobel} is non-negative (since the semigroup and
the function are non-negative)}, we get, for every $\kappa\in
(0,1)$,
\begin{align*}
\mathcal L_\alpha { b_{\kappa, p}(x,t)} &{ \ge
\frac{\gamma(p)}{2}\left[\tau_\kappa |\partial_tP_t^\alpha
f(x)|^2+\tau_\kappa^{-1}\sum_{i=1}^d
|\partial_t\mathbb{P}_t^{\alpha,i}g_i(x)|^2 \right.}\\ &{ \left.
\quad + \sum_{j=1}^d \left(\tau_\kappa |\partial_{x_j}P_t^\alpha
f(x)|^2+\tau_\kappa^{-1}\sum_{i=1}^d
|\partial_{x_j}\mathbb{P}_t^{\alpha,i}g_i(x)|^2 \right)\right]}\\ &
\ge \frac{\gamma(p)}{2}\left(\tau_\kappa |
P_t^\alpha(f)(x)|_*^2+\tau_\kappa^{-1}|
\mathbb{P}_t^{\alpha}(g)(x)|_*^2\right), \,\,\,x\in
\mathbb{R}_+^d\,\,\,and\,\,\,t>0,
\end{align*}
being $\tau_\kappa=\tau_\kappa(x,t)>0$.

Then, since  $as+b/s\geq 2\sqrt{ab}$ for every $s,a,b\in \mathbb
R_+$, we get \begin{align*} \mathcal L_\alpha b_{\kappa,
p}(x,t)&\ge\gamma(p)| P_t^\alpha(f)(x)|_*|
\mathbb{P}_t^{\alpha}(g)(x)|_*.\qedhere
\end{align*}
\end{proof}

 {In the sequel, we shall use the following notation: given $F=(F_1, \dots, F_d)$ with $F_i$, $i=1,\dots,d$, defined on $\mathbb R^d_+$, we denote as usual
    \[|F|=\left(\sum\limits_{i=1}^d |F_i|^2\right)^{1/2}\]
and, for $1< p< \infty$, we will write
\[\left\VERT F\right\VERT _{p}:=\left\|\left(\sum\limits_{i=1}^d |F_i|^2\right)^{1/2}\right\|_{p}.\]}

We are now in position to prove \eqref{acotRiesz}. Let $f,g_i\in
C_c^\infty(\mathbb{R}_+^d)$, $g_i\ge 0$, $i=1,\ldots,d$. We deduce
from Lemma \ref{Ralfaifg} that
\begin{align*}
\left| \sum_{i=1}^d\right.&\left.\int_{\mathbb{R}_+^d}R_{\alpha,i}(f)(x)g_i(x)x^{2\alpha}dx\right|\\ & =4\left|\int_0^\infty\int_{\mathbb{R}_+^d}t\sum_{i=1}^d\partial_{x_i}P_t^\alpha(f)(x)\partial_t\mathbb{P}_t^{\alpha,i}(g_i)(x)x^{2\alpha}dxdt\right|\\
&\le
4\int_0^\infty\int_{\mathbb{R}_+^d}t\left(\sum_{i=1}^d|\partial_{x_i}P_t^\alpha(f)(x)|^2\right)^{1/2}\left(\sum_{i=1}^d|\partial_t\mathbb{P}_t^{\alpha,i}(g_i)(x)|^2\right)^{1/2}x^{2\alpha}dxdt
\\ &\le 4 \int_0^\infty \int_{\mathbb R_+^d} | P_t^\alpha
f(x)|_*| \mathbb{P}_t^{\alpha} g(x)|_*x^{2\alpha}dxt dt
\\& \le 4\lim_{\epsilon\to
0^+}\lim_{m\to\infty}\int_0^\infty \int_{[\frac{1}{m},m]^d}|
P_t^\alpha f(x)|_*| \mathbb{P}_t^{\alpha}
g(x)|_*x^{2\alpha}dxe^{-\epsilon t}t dt
\\& \le \frac{4}{\gamma(p)}\limsup_{\epsilon\to
0^+}\limsup_{m\to\infty}\int_0^\infty \int_{[\frac{1}{m},m]^d}
\mathcal{L}_\alpha b_{\kappa_m, p}(x,t)x^{2\alpha}dx e^{-\epsilon
t}tdt,
\end{align*}
provided that $\kappa_m\in (0,1)$, for every $m\in \mathbb{N}$. We
shall prove now that the right-hand side of the above inequality is
bounded by a constant times $\|f\|_p^p+{\left\VERT g\right\VERT
_{p'}^{p'}}$, with constant independent of dimension.

Let $\epsilon>0$ and $m\in \mathbb N$. We take $\kappa_m\in (0,1)$ such that
\[\kappa_m^{p'-1/2}m^{2|\alpha|+d}\le 1,\] which implies, since $p\ge 2$, $\kappa_m^{p-1/2}m^{2|\alpha|+d}\le 1$ and $\lim_{m\rightarrow \infty }\kappa_m=0$. Let $0<a<1<b$ be
given in such a way that $\text{supp}(f)$ and {$\text{supp}(g_i),
i=1\dots, d$} are contained in $[a,b]^d.$  Recalling the definition
of $\mathcal L_\alpha$, we have to estimate
\[I_{m,\epsilon}:=\int_0^\infty \int_{[\frac{1}{m},m]^d} \partial_t^2 { b_{\kappa_m,  p}}(x,t)x^{2\alpha}dx e^{-\epsilon t}tdt,\]
and, for every $\ell=1, \dots, d$,
\[J_{m,\epsilon,\ell}:=\int_0^\infty \int_{[\frac{1}{m},m]^d} x_\ell^{-2\alpha_\ell}\partial_{x_\ell} (x_\ell^{2\alpha_\ell} \partial_{x_\ell} { b_{\kappa_m,  p}}(x,t))x^{2\alpha}dx e^{-\epsilon t}tdt.\]

{\bf Estimation of $I_{m,\epsilon}$:} By using integration by parts we get
\begin{align*}
I_{m,\epsilon}&=\int_0^\infty \partial_t \left(\int_{[\frac{1}{m},m]^d } \partial_t { b_{\kappa_m,  p}}(x,t)x^{2\alpha}dx\, \right)e^{-\epsilon t}tdt \\
&=\lim_{T\to \infty} T e^{-\epsilon T}\int_{[\frac{1}{m},m]^d }\partial_t { b_{\kappa_m,  p}}(x,T)x^{2\alpha}dx \\
&\quad - \lim_{t\to 0^+}te^{-\epsilon t}\int_{[\frac{1}{m},m]^d } \partial_t { b_{\kappa_m,  p}}(x,t)x^{2\alpha}dx   \\
&\quad -\int_0^\infty \int_{[\frac{1}{m},m]^d } \partial_t {
b_{\kappa_m,  p}}(x,t)x^{2\alpha}dx \, e^{-\epsilon t}(1-\epsilon
t)dt.
\end{align*}
The third term can be rewritten in the following way
\begin{align*}
-\int_0^\infty \int_{[\frac{1}{m},m]^d } \partial_t { b_{\kappa_m,  p}}&(x,t) x^{2\alpha}dx \, e^{-\epsilon t}(1-\epsilon t)dt\\
&=-\int_0^\infty \partial_t \left(\int_{[\frac{1}{m},m]^d }  { b_{\kappa_m,  p}}(x,t)x^{2\alpha}dx e^{-\epsilon t}(1-\epsilon t)\right)dt \\
&=-\lim_{T\to \infty} e^{-\epsilon T}(1-\epsilon T)\int_{[\frac{1}{m},m]^d } { b_{\kappa_m,  p}}(x,T)x^{2\alpha}dx\\
&\quad +\lim_{t\to 0^+}\int_{[\frac{1}{m},m]^d } { b_{\kappa_m,  p}}(x,t)x^{2\alpha}dx   \\
& \quad +\int_0^\infty \int_{[\frac{1}{m},m]^d }  { b_{\kappa_m,  p}}(x,t)x^{2\alpha}dx (\epsilon^2t-2\epsilon)e^{-\epsilon t}dt  \\
& \le \lim_{T\to \infty} e^{-\epsilon T}(\epsilon T-1)\int_{[\frac{1}{m},m]^d } { b_{\kappa_m,  p}}(x,T)x^{2\alpha}dx \\
&\quad +\lim_{t\to 0^+}\int_{[\frac{1}{m},m]^d } { b_{\kappa_m,  p}}(x,t)x^{2\alpha}dx   \\
& \quad +\int_0^\infty \int_{[\frac{1}{m},m]^d }  { b_{\kappa_m,
p}}(x,t)x^{2\alpha}dx \epsilon^2te^{-\epsilon t}dt,
\end{align*}
where we have used again integration by parts and the fact that ${
b_{\kappa_m,  p}}$ is non-negative.

The function $P_t^\alpha f(x)$ is continuous in $(x,t)\in
(0,\infty)\times [0,\infty)$ and $\lim_{t\to 0^+}P_t^\alpha f(x)$
$=f(x)$, $x\in (0,\infty)$. Also, we have that the function
$\mathbb{P}_t^{\alpha,i} {g_i}(x)$ is continuous in $(x,t)\in
(0,\infty)\times [0,\infty)$ and $\lim_{t\to
0^+}\mathbb{P}_t^{\alpha,i} {g_i}(x)=\lim_{t\to
0^+}x_iP_t^{\alpha+e_i}({g_i}/._i)(x)={g_i}(x)$, $x\in (0,\infty)${,
for each $i=1,\dots,d$}. We obtain then
\begin{align*}
\lim_{t\to 0^+}\int_{[\frac{1}{m},m]^d } { b_{\kappa_m,  p}}(x,t)x^{2\alpha}dx&=\int_{[\frac{1}{m},m]^d } { b_{\kappa_m,  p}}(x,0)x^{2\alpha}dx\\
&=\int_{[\frac{1}{m},m]^d }B_{\kappa_m,p}(f(x),g(x))x^{2\alpha}dx.
\end{align*}

Therefore,
\begin{align*}
I_{m,\epsilon}&\leq \lim_{T\to \infty} T e^{-\epsilon T}\int_{[\frac{1}{m},m]^d }\partial_t { b_{\kappa_m,  p}}(x,T)x^{2\alpha}dx\\
&\quad - \lim_{t\to 0^+}te^{-\epsilon t}\int_{[\frac{1}{m},m]^d } \partial_t { b_{\kappa_m,  p}}(x,t)x^{2\alpha}dx   \\
& \quad +\lim_{T\to \infty} e^{-\epsilon T}(\epsilon T-1)\int_{[\frac{1}{m},m]^d } { b_{\kappa_m,  p}}(x,T)x^{2\alpha}dx\\
&\quad +\int_{[\frac{1}{m},m]^d } B_{\kappa_m, p}(f(x),g(x))x^{2\alpha}dx   \\
& \quad +\int_0^\infty \int_{[\frac{1}{m},m]^d }  { b_{\kappa_m,
p}}(x,t)x^{2\alpha}dx \epsilon^2te^{-\epsilon
t}dt=:\sum\limits_{j=1}^5 I_{m,\epsilon}^j.
\end{align*}
Let us prove that
$I_{m,\epsilon}^1=I_{m,\epsilon}^2=I_{m,\epsilon}^3=0$. Since
$\int_{\mathbb{R}^d_+}P_t^\alpha(x,y)y^{2\alpha}dy=1$, for every
$x\in \mathbb{R}^d_+$ and $t>0$,  $|t\partial_tP_t^\alpha(x,y)|\le
CP_{t/2}^\alpha(x,y)$, $x,y\in \mathbb{R}_+^d$ and $t>0$, we deduce
that $|P_t^\alpha(f)(x)|+|t\partial_tP_t^\alpha(f)(x)|\le
C\|f\|_\infty$, for every $x\in \mathbb{R}^d_+$ and $t>0$. Also, by
\eqref{Z1} and \eqref{P<PP}, it follows that
$|\mathbb{P}_t^{\alpha,i}({g_i})(x)|+|t\partial_t\mathbb{P}_t^{\alpha,i}({g_i})(x)|\le
C\|{g_i}\|_\infty$, for every {$i=1,\dots,d$,} $x\in \mathbb{R}^d_+$
and $t>0$. Then, from \ref{partialBk},  we have
\begin{align*}
t|\partial_t  { b_{\kappa_m,  p}}(x,t)|&\le {C} \left(|P_t^\alpha f(x)|^{p-1}+{|\mathbb P_t^{\alpha}g(x)|}+{ |\mathbb P_t^{\alpha}g(x)|^{p'-1}}+\kappa_m^{p'-1}\right)\\
&\quad \times\left(|t\partial_t P_t^\alpha f(x)|+{ |t\partial_t\mathbb P_t^{\alpha}g(x)|}\right)  \\
&\le C \left(\|f\|_\infty^{p-1}+{\left(\sum\limits_{i=1}^d
\|g_i\|_\infty^2\right)^{1/2}}+{\left(\sum\limits_{i=1}^d
\|g_i\|_\infty^2\right)^{\frac{p'-1}{2}}}+\kappa_m^{p'-1}\right)
\\
&\quad \times\left(|t\partial_t P_t^\alpha f(x)|+{ |t\partial_t\mathbb P_t^{\alpha}g(x)|}\right)  \\
&\le {C}
\left(\|f\|_\infty^{p-1}+{\left(\sum\limits_{i=1}^d \|g_i\|_\infty^2\right)^{1/2}}+{\left(\sum\limits_{i=1}^d \|g_i\|_\infty^2\right)^{\frac{p'-1}{2}}}+\kappa_m^{p'-1}\right)\\
&\quad \times \left(\|f\|_\infty+{\left(\sum\limits_{i=1}^d
\|g_i\|_\infty^2\right)^{1/2}}\right),
\end{align*}
{for $x\in \mathbb R^d_+$ and $t>0$.}

Then, from \ref{tdP-t0}, \ref{tdP-tinfty}, \ref{tdPP-t0} and
\ref{tdPP-tinfty}, we deduce that
\[
t|\partial_t { b_{\kappa_m,  p}}(x,t)|\to 0,
\]
whenever $t\to \infty$ or $t\to 0^+$.

Thus, by applying the Dominated Convergence Theorem we get that
$I_{m,\epsilon}^1=I_{m,\epsilon}^2=0$.

Let us take care of $I_{m,\epsilon}^3$. In this case, by using \ref{cotaBk}, for $T>\epsilon^{-1}$, we have
\begin{align*}
&(\epsilon T-1)e^{-\epsilon T}\int_{[\frac{1}{m},m]^d} { b_{\kappa_m,  p}}(x,T)x^{2\alpha}dx\\
&\quad\leq C \epsilon T e^{-\epsilon T}\int_{[\frac{1}{m},m]^d} \left[ (|P^\alpha_T f(x)|+\kappa_m)^p +({|\mathbb{P}^{\alpha}_T g(x)|}+\kappa_m)^{p'} \right]x^{2\alpha}dx\\
& \quad\leq C \epsilon T e^{-\epsilon T}\left(\|f\|_p^p+{\left\VERT
g\right\VERT
_{p'}^{p'}}+m^{2|\alpha|+d}(\kappa_m^p+\kappa_m^{p'})\right)\longrightarrow
0,
\end{align*}
as $T\to \infty$. Hence, $I_{m,\epsilon}^3=0$ which yields that
\begin{align*}
I_{m,\epsilon}&\leq I_{m}^4+I_{m,\epsilon}^5\\
&=\int_{[\frac{1}{m},m]^d }B_{\kappa_m,p}(f(x),g(x))x^{2\alpha}dx
+\int_0^\infty \int_{[\frac{1}{m},m]^d}  b_{\kappa_m,
p}(x,s/\epsilon)x^{2\alpha}dx se^{-s}ds.
\end{align*}
Following the ideas in \cite[p.~15]{W2}, from \ref{cotaBk} and the
choice of $\kappa_m$, we have that
\begin{align*}
I_{m}^4&\le \frac{1+\gamma(p)}{2}\left((1+\epsilon)^p\|f\|_p^p+(1+\epsilon)^{p'}{\left\VERT g\right\VERT _{p'}^{p'}}\right.\\
&\quad\left.+(1+\epsilon^{-1})^p\kappa_m^p m^{2|\alpha|+d}+(1+\epsilon^{-1})^{p'}\kappa_m^{p'} m^{2|\alpha|+d}\right)\\
&\leq
\frac{1+\gamma(p)}{2}\left((1+\epsilon)^p\|f\|_p^p+(1+\epsilon)^{p'}{\left\VERT
g\right\VERT
_{p'}^{p'}}+\kappa_m^{1/2}\left[(1+\epsilon^{-1})^p+(1+\epsilon^{-1})^{p'}\right]\right).
\end{align*}
Then, since $\lim_{m\rightarrow \infty} \kappa_m=0$, we obtain
\[
\limsup_{m\to \infty}I_{m}^4\le
\frac{1+\gamma(p)}{2}\left(\|f\|_p^p+{\left\VERT g\right\VERT
_{p'}^{p'}}\right).
\]

In order to estimate $I_{m,\epsilon}^5$, we apply \ref{cotaBk}, \eqref{P<PP} and the properties on $\kappa_m$, to get
\begin{align*}I_{m,\epsilon}^5& \le (1+\gamma(p))\left[2^{p-1} \left(\int_0^\infty \int_{[\frac{1}{m},m]^d}|P_{s/\epsilon}^\alpha f(x)|^px^{2\alpha}dxse^{-s}ds+\kappa_m^{p}m^{2|\alpha|+d}\right)\right.\\
&\quad\left. +2^{p'-1} \left(\int_0^\infty \int_{[\frac{1}{m},m]^d}{|\mathbb{P}^{\alpha}_{s/\epsilon} g(x)|^{p'}} x^{2\alpha}dxse^{-s}ds+\kappa_m^{p'}m^{2|\alpha|+d}\right)\right]\\
& \le (1+\gamma(p)) 2^{p-1} \int_0^\infty \int_{[\frac{1}{m},m]^d}\left(|P_{s/\epsilon}^\alpha f(x)|^p+{|P^{\alpha}_{s/\epsilon} g(x)|^{p'}}\right)x^{2\alpha}dx se^{-s}ds \\
&\quad +(1+\gamma(p)) 2^{p-1}\kappa_m^{1/2},
\end{align*}
{with $|P^{\alpha}_{s/\epsilon} g(x)|=\left(\sum_{i=1}^d
|P^{\alpha}_{s/\epsilon} g_i(x)|^2\right)^{1/2}$ and $|\mathbb
P^{\alpha}_{s/\epsilon} g(x)|=\left(\sum_{i=1}^d |\mathbb
P^{\alpha,i}_{s/\epsilon} g_i(x)|^2\right)^{1/2}$.} Thus
\[\limsup_{m\to \infty}I_{m,\epsilon}^5\le (1+\gamma(p)) 2^{p-1}\int_0^\infty \int_{\mathbb R^d_+}\left(|P^\alpha_{s/\epsilon}f(x)|^p+{|P^{\alpha}_{s/\epsilon} g(x)|^{p'}}\right)x^{2\alpha}dx se^{-s}ds.\]
If we define
\[H_\epsilon (x,s)=
\left(|P^\alpha_{s/\epsilon}f(x)|^p+{|P^{\alpha}_{s/\epsilon}
g(x)|^{p'}}\right) se^{-s},\,\,\,x\in
\mathbb{R}_+^d\,\,\,\mathrm{and}\,\,\,s>0,\] we observe that
$\lim_{\epsilon\to 0^+}H_\epsilon (x,s)=0$ for every $ x\in \mathbb
R_+^{d}$ and $s>0$, by virtue of \ref{P-tinfty}. On the other hand,
since the maximal operator $P^\alpha_{*}f=\sup_{t>0} | P^\alpha_t f|
$ is bounded on $L^q(\mathbb R^d_+,x^{2\alpha}dx)$ for every
$1<q\le \infty$ by virtue of \cite[Proposition~6.2]{NS}
and the fact that $\|P_*^\alpha f\|_q\leq \|T_*^\alpha f\|_q\le
C\|f\|_q$, for $1< q\le \infty,$ we get
that
\[H_\epsilon (x,s)\le \left(|P^\alpha_{*}f(x)|^p+\sum\limits_{i=1}^d|P^{\alpha}_{*} g_i(x)|^{p'}\right) se^{-s}\in L^1(\mathbb R^{d+1}_+,x^{2\alpha}dxds).\]

Finally, by applying the Dominated Convergence Theorem we obtain that \[\limsup_{\epsilon\to 0^+}\limsup_{m\to \infty}I_{m,\epsilon}^5=0.\]
Therefore
\[\limsup_{\epsilon\to 0^+}\limsup_{m\to \infty}\int_0^\infty  \int_{[\frac{1}{m},m]^d} \partial_t^2 { b_{\kappa_m,  p}}(x,t)x^{2\alpha}dx e^{-\epsilon t}tdt\le \frac{1+\gamma(p)}{2}(\|f\|_p^p+{\left\VERT g\right\VERT_{p'}^{p'}}).\]

{\bf Estimation of $J_{m,\epsilon,\ell}$:} Let $\ell=1,\ldots,d$. We shall see that
\[\limsup_{\epsilon\to 0^+}\limsup_{m\to \infty}J_{m,\epsilon, \ell}{\leq 0.}\]
Actually, we will show that for every
$\epsilon>0$,
\begin{equation}
\limsup_{m\to \infty}J_{m,\epsilon, \ell}\leq 0.
\end{equation}

In order to do so, we integrate by parts on the $\ell$-th variable
to get {\begin{align*} J_{m,\epsilon,\ell} &
=\int_0^\infty\int_{[\frac{1}{m},m]^{d-1}}
m^{2\alpha_\ell}\partial_{x_\ell}  b_{\kappa_m,
p}(m,\widehat{x}^\ell,
t)(\widehat{x}^\ell)^{2\widehat{\alpha}^\ell}d\widehat{x}^\ell\
e^{-\epsilon t}tdt\\
&\quad-\int_0^\infty\int_{[\frac{1}{m},m]^{d-1}}(1/m)^{2\alpha_\ell}\partial_{x_\ell}
b_{\kappa_m,
p}(1/m,\widehat{x}^\ell,t)(\widehat{x}^\ell)^{2\widehat{\alpha}^\ell}d\widehat{x}^\ell\
e^{-\epsilon t}tdt\\
&=:J_{m,\epsilon,\ell}^1-J_{m,\epsilon,\ell}^2.
\end{align*}

Then, we will see that
\[\limsup_{m\to \infty}J_{m,\epsilon, \ell}^1-\liminf_{m\to \infty}J_{m,\epsilon, \ell}^2\leq 0.\]}

Let us {first} remark that from \ref{partialBk}, \eqref{P<PP}, and
the boundedness of $P_t^\alpha $ on $L^\infty(\mathbb R^d_+)$, we
get, { for every $x\in \mathbb R^d_+$ and $t>0$,}
\begin{align*}
\nonumber x_\ell^{2\alpha_\ell}|\partial_{x_\ell} {b_{\kappa_m,  p}}&(x,t)|\\
&\le C_p  \left[\left(|P_t^\alpha f(x)|+\kappa_m\right)^{p-1}+|\mathbb P_t^{\alpha}g(x)|+\kappa_m\right]x_\ell^{2\alpha_\ell}|\partial_{x_\ell} P_t^\alpha f(x)| \\
&\quad +C_p\left({|\mathbb P_t^{\alpha}g(x)|}+\kappa_m\right)^{p'-1}x_\ell^{2\alpha_\ell}{ |\partial_{x_\ell}\mathbb P_t^{\alpha}g(x)|}\\
&\le C_p  \left[\left(|P_t^\alpha f(x)|+\kappa_m\right)^{p-1}+{| P_t^{\alpha}g(x)|}+\kappa_m\right]x_\ell^{2\alpha_\ell}|\partial_{x_\ell} P_t^\alpha f(x)| \\
&\quad +C_p\left({| P_t^{\alpha}g(x)|}+\kappa_m\right)^{p'-1}x_\ell^{2\alpha_\ell}{ |\partial_{x_\ell}\mathbb P_t^{\alpha}g(x)|}\\
&\le C_p \left[(\|f\|_\infty+\kappa_m)^{p-1}+{\left(\sum\limits_{i=1}^d \|g_i\|_\infty^2\right)^{1/2}}+\kappa_m\right]x_\ell^{2\alpha_\ell}|\partial_{x_\ell} P_t^\alpha f(x)| \\
&\quad +C_p\left({\left(\sum\limits_{i=1}^d
\|g_i\|_\infty^2\right)^{1/2}}+\kappa_m\right)^{p'-1}x_\ell^{2\alpha_\ell}{
|\partial_{x_\ell}\mathbb P_t^{\alpha}g(x)|}.
\end{align*}

{In order to estimate $J_{m,\epsilon,\ell}^1$,} we consider the
functions
\[
A(x,t)=x_\ell^{2\alpha_\ell}|\partial_{x_\ell}P_t^\alpha(f)(x)|,\,\,\,x\in
\mathbb{R}_+^d\,\,\, \mathrm{and} \,\,\,t>0,
\]
and, {for $i=1,\dots, d$,
\begin{align*}
B_i(x,t)&=x_\ell^{2\alpha_\ell}|\partial_{x_\ell}\mathbb{P}_t^{\alpha,i}(g_i)(x)|,\,\,\,x\in
\mathbb{R}_+^d\,\,\, \mathrm{and} \,\,\,t>0.
\end{align*}}
According to \ref{Des2} we get
\begin{align*}
A(x,t)&\le Cx_\ell^{2\alpha_\ell}\int_{[a,b]^d}\frac{t(x_\ell+1)}{(t^2+|x-y|^2)^{|\alpha|+\frac{d+3}{2}}}y^{2\alpha}dy\\
&\le Ctx_\ell^{2\alpha_\ell+1}\int_{[a,b]^d}\frac{1}{(x_\ell^2+|\hat{x}^\ell-\hat{y}^\ell|^2)^{|\alpha|+\frac{d+3}{2}}}dy\\
&\le
C\frac{t}{x_\ell}\int_{[a,b]^d}\frac{1}{(1+|\hat{x}^\ell-\hat{y}^\ell|^2)^{|\hat{\alpha}^\ell|+\frac{d+1}{2}}}dy,
\end{align*}
when $t>0$, $x\in \mathbb{R}^d_+$ and $x_\ell>2b$, and
\begin{align*}
A(x,t)&\le Cx_\ell^{2\alpha_\ell}\int_{[a,b]^d}\frac{t(x_\ell+1)}{(t^2+|x-y|^2)^{|\alpha|+\frac{d+3}{2}}}y^{2\alpha}dy\\
&\le
Ctx_\ell^{2\alpha_\ell}\int_{[a,b]^d}\frac{1}{(1+|\hat{x}^\ell-\hat{y}^\ell|^2)^{|\hat{\alpha}^\ell|+\frac{d+3}{2}}}dy,
\end{align*}
when $t>0$, $x\in \mathbb{R}^d_+$ and $x_\ell<a/2$.

By \eqref{Z1} we get
\begin{equation}\label{derivPPi}
\partial_{x_\ell}\mathbb{P}_t^{\alpha,i}(g)(x)=\delta_{i,\ell}P_t^{\alpha+e_i}\left(\frac{g}{._i}\right)(x)+x_i\partial_{x_\ell}P_t^{\alpha+e_i}\left(\frac{g}{._i}\right)(x),
\end{equation}
for $t>0\,\,\,\mathrm{and}\,\,\,x\in \mathbb{R}_+^d.$
Here, $\delta_{i,\ell}=0$, when $i\neq \ell$, and
$\delta_{i,\ell}=1$, when $i=\ell$. Then, by using \ref{Des1} and
\ref{Des2} and proceeding as above, we obtain
\begin{align*}
B_i(x,t)&\le Cx_\ell^{2\alpha_\ell}\Bigg(\int_{[a,b]^d}\frac{t}{(t^2+|x-y|^2)^{|\alpha|+\frac{d+3}{2}}}y^{2\alpha}dy\\
&\quad + x_i\int_{[a,b]^d}\frac{t(x_\ell+1)}{(t^2+|x-y|^2)^{|\alpha|+\frac{d+5}{2}}}y^{2\alpha}dy\Bigg)\\
&\le
Ct\Bigg(\frac{1}{x_\ell}+\frac{x_i}{x_\ell^2}\Bigg)\int_{[a,b]^d}\frac{1}{(1+|\hat{x}^\ell-\hat{y}^\ell|^2)^{|\hat{\alpha}^\ell|+\frac{d+1}{2}}}dy
\end{align*}
when $t>0$, $x\in \mathbb{R}^d_+$ and $x_\ell>2b$, and
\begin{align*}
B_i(x,t) &\le
Ctx_\ell^{2\alpha_\ell}(1+x_i)\int_{[a,b]^d}\frac{1}{(1+|\hat{x}^\ell-\hat{y}^\ell|^2)^{|\hat{\alpha}^\ell|+\frac{d+1}{2}}}dy,
\end{align*}
when $t>0$, $x\in \mathbb{R}^d_+$ and $x_\ell<a/2$.

By the case $x_\ell >2 b$ in the estimates of
$A(x,t)$ and $B_i(x,t)$,  $i=1,\dots, d$, we get, for every
$\widehat{x}^\ell\in [1/m,m]^{d-1}$ and $t>0$, that
\begin{equation*}
m^{2\alpha_\ell} |\partial_{x_\ell}b_{\kappa_m,
p}(m,\widehat{x}^\ell,t)|\le
C(f,g_i,b)\frac{t}{m}\int_{[a,b]^d}\frac{1}{(1+|\hat{x}^\ell-\hat{y}^\ell|^2)^{|\hat{\alpha}^\ell|+\frac{d+1}{2}}}dy.
\end{equation*}
Then, $$ m^{2\alpha_\ell} |\partial_{x_\ell}b_{\kappa_m,
p}(m,\widehat{x}^\ell,t)|\to 0,
$$
 as $m\to \infty$. Also, the right-hand side of the above
inequality belongs to $L^1(\mathbb{R}^{d-1}_+\times (0,\infty),
(\widehat{x}^{\ell})^{2\hat{\alpha}^\ell}d\widehat{x}^{\ell}te^{-\epsilon
t}dt).$ Therefore $\limsup_{m\to\infty} J_{m,\epsilon, \ell}^1= 0.$

{We shall see now that $\liminf_{m\to\infty} J_{m,\epsilon,
\ell}^2\ge 0$. From \eqref{derivPPi} and the fact that
$P_t^{\alpha+e_i}$ is positive, we know that
\[
\partial_{x_\ell}\mathbb{P}_t^{\alpha,i}(g_i)(x)\geq
x_i\partial_{x_\ell}P_t^{\alpha+e_i}\left(\frac{g_i}{._i}\right)(x),
\]
for every $i=1,\dots, d$, $x\in \mathbb R^d_+$ and $t>0$.

On the other hand, \ref{partialBk} implies that $\partial_{\eta_i}
B_{\kappa_m,p}(u(x,t))\ge 0$, for every $i=1,\ldots,
d.$ This, together with the above inequality, yields

\begin{align*}
x_\ell^{2\alpha_\ell}&\partial_{x_\ell}  b_{\kappa_m,  p}(x,t)=x_\ell^{2\alpha_\ell}\partial_\zeta B_{\kappa_m,p}(u(x,t))\partial_{x_\ell} P_t^\alpha f(x)\\
&\quad +x_\ell^{2\alpha_\ell} \sum\limits_{i=1}^d \partial_{\eta_i} B_{\kappa_m,p}(u(x,t))\partial_{x_\ell}\mathbb{P}_t^{\alpha,i}(g_i)(x)\\
&\geq \partial_\zeta B_{\kappa_m,p}(u(x,t))x_\ell^{2\alpha_\ell}\partial_{x_\ell} P_t^\alpha f(x)\\
&\quad +\sum\limits_{i=1}^d \partial_{\eta_i} B_{\kappa_m,p}(u(x,t))
x_ix_\ell^{2\alpha_\ell}
\partial_{x_\ell}P_t^{\alpha+e_i}\left(\frac{g_i}{._i}\right)(x),\,\,\,x\in \mathbb{R}^d_+\,\,\,\rm{and}\,\,\,t>0.
\end{align*}
} Thus, {
\begin{align*}
J^2_{m,\epsilon, \ell}& \ge
\int_0^\infty\int_{[\frac{1}{m},m]^{d-1}}\left(1/m\right)^{2\alpha_\ell}\partial_{\zeta}B_{\kappa_m,p}(u(1/m,
\widehat{x}^\ell,t)) \\ & \quad \quad \quad \quad \quad \times
\partial_{x_\ell}P_t^\alpha f(1/m,
\widehat{x}^\ell)(\widehat{x}^\ell)^{2\widehat{\alpha}^\ell}d\widehat{x}^\ell\
e^{-\epsilon t}tdt\\ &\quad +\sum_{i=1}^d
\int_0^\infty\int_{[\frac{1}{m},m]^{d-1}}
\left(1/m\right)^{2\alpha_\ell}\partial_{\eta_i}B_{\kappa_m,p}(u(1/m,
\widehat{x}^\ell,t))\\ & \quad \quad \quad \quad \quad \times
x_i\partial_{x_\ell}P_t^{\alpha+e_i}
\left(\frac{g_i}{._i}\right)(1/m,
\widehat{x}^\ell)(\widehat{x}^\ell)^{2\widehat{\alpha}^\ell}d\widehat{x}^\ell\
e^{-\epsilon t}tdt\\ & \quad =:J_{m,\epsilon, \ell}^3
\end{align*}}
From property \ref{dP-x0} we get that the integrands of
$J_{m,\epsilon, \ell}^3$ converge pointwisely to $0$  as $m\to
\infty.$ On the other hand, in order to obtain an integrable
function independent of $m$ we proceed to bound the integrands as we
have done before, but now considering the estimates of $A$ and
$B_i$, $i=1,\dots, d,$ on the range $x_\ell<a/2.$ Then, by the
Dominated Convergence Theorem, we get that $\lim_{m\to
\infty}J_{m,\epsilon, \ell}^3=0.$ And so $\liminf_{m\to \infty}
J_{m,\epsilon, \ell}^2\ge 0.$

Bringing things together we have proved that
\[\left|\sum\limits_{i=1}^d\int_{\mathbb R_+^d}  R_{\alpha,i} f(x)g_i(x)  x^{2\alpha} dx\right|
\le 2\frac{1+\gamma(p)}{\gamma(p)}\left(\|f\|_p^p+\left\VERT g\right\VERT_{p'}^{p'}\right).\]
Let us take $\lambda>0$. By applying the above inequality to
$\lambda f$ and {$\frac{g_i}{\lambda}$, $i=1,\dots, d$,} we have
\[\bigg|\sum\limits_{i=1}^d\int_{\mathbb R_+^d}
R_{\alpha,i} f(x) g_i(x)  x^{2\alpha} dx\bigg|\leq
2\frac{1+\gamma(p)}{\gamma(p)}\left(\lambda^p
\|f\|_p^p+\frac{1}{\lambda^{p'}}\left\VERT
g\right\VERT_{p'}^{p'}\right),\] and minimizing on $\lambda$ yields
{\begin{align*}
\bigg|\sum\limits_{i=1}^d\int_{\mathbb R_+^d}  R_{\alpha,i} f(x) g_i(x)  x^{2\alpha} dx\bigg|&\leq 2\frac{1+\gamma(p)}{\gamma(p)} \left(\left(\frac{p}{p'}\right)^{1/p}+\left(\frac{p'}{p}\right)^{1/p'}\right)\|f\|_p\left\VERT g\right\VERT_{p'}\\
&:=4D_p\|f\|_p\left\VERT g\right\VERT_{p'}.
\end{align*}}
Approximation arguments allow us to see that the last inequality
also holds when $0\le {g_i}\in L^p(\mathbb{R}^d_+,x^{2\alpha}dx)$,
{$i=1,\dots, d$} and $f\in C_c^\infty(\mathbb{R}_+^d)$. Therefore,
if $f\in C_c^\infty(\mathbb{R}_+^d)$ and ${g_i}\in
L^p(\mathbb{R}^d_+,x^{2\alpha}dx)$ for every {$i=1,\dots, d$}, by
considering {$g_i=g_i^+-g_i^-$, where $g_i^+=\max\{g_i,0\}$,} we
deduce
\[{\|R_{\alpha} f\|}_{L^p(\mathbb R^d_+,x^{2\alpha}dx)}\leq 8D_p\|f\|_{L^p(\mathbb R^d_+,x^{2\alpha}dx)}.\]
Finally, as it was proved in \cite[p. 761]{W2}, the constant $D_p$
is smaller than $6(p^*-1)$, which let us conclude with the proof in
this case.

In order to prove the result when $1<p<2$ we can proceed in a
similar way by interchanging the roles between $p$ and $p'$ and
between $P_t^\alpha(f)$ and $\mathbb{P}_t^{\alpha}(g)$. More
precisely, we need to consider $ b_{\kappa,p} (x,t)=
\tilde{B}_{\kappa,p}(\mathbb{P}_t^{\alpha}(g)(x),
P_t^\alpha(f)(x))$, where $\tilde{B}_{\kappa,p}$ is the Bellman
function defined on $\mathbb R^{d}\times \mathbb R$, that is,
$m_1=d$ and $m_2=1$. This function satisfies Proposition
\ref{bellman} with $p$ replaced by $p'$ and viceversa.

\vspace{3mm}

Theorem 1.2 {follows the same lines of \cite[Corollary 2]{DV2},} by
using above arguments and proceeding as in the proof of
\cite[Proposition 3]{DV2}.

\end{document}